\newtheorem{Th}{Theorem}[section]
\newtheorem{Lem}[Th]{Lemma}
\theoremstyle{definition}
\newtheorem{Rem}[Th]{Remark}
\newtheorem{Ex}[Th]{Example}
\newcommand{\wt}{\widetilde}
\newcommand{\eps}{\varepsilon}
\newcommand{\R}{\mathbb{R}}
\newcommand{\Z}{\mathbb{Z}}
\newcommand{\cE}{{\mathcal E}}
\newcommand{\cS}{{\mathcal S}}
\newcommand{\weakto}{\rightharpoonup}
\numberwithin{equation}{section}
\newcommand{\rad}{\mathrm{rad}}
\begin{document}

\nocite{*}

\title{Compact embeddings for weighted fractional Sobolev spaces and applications to Nonlinear Schrödinger Equations}

\author[1]{Federico Bernini\thanks{Email address: \texttt{f.bernini@univpm.it}}}

\author[2]{Sergio Rolando\thanks{Email address: \texttt{sergio.rolando@unito.it}}}

\author[2]{Simone Secchi\thanks{Email address: \texttt{simone.secchi@unimib.it}}}

\affil[1]{\small Dipartimento di Ingegneria Industriale e Scienze Matematiche, Università Politecnica delle Marche,	Via Brecce Bianche, 12, 60131 Ancona, Italy}    

\affil[2]{\small Dipartimento di Matematica e Applicazioni, Università degli Studi di Milano-Bicocca, Via Roberto Cozzi 55, I-20125, Milano, Italy}

\renewcommand\Authands{ and }

\date{}
\maketitle

\begin{abstract} 
The aim of this work is to prove a compact embedding for a weighted fractional Sobolev spaces. As an application, we use this embedding to prove, via variational methods, the existence of solutions for the following Schr\"odinger equation
\[
	(-\Delta)^su + V(|x|)u = K(|x|)f(u), \quad \text{ in } \R^N,
\]
where the two measurable functions $K > 0$ and $V \geq 0$ could vanish at infinity.
\medskip

\noindent \textbf{Keywords:} compact embeddings, vanishing potential, fractional Schr\"odinger equation.
   
\noindent \textbf{AMS Subject Classification:}  46E35, 
												35R11, 
												35J20. 
\end{abstract}

\tableofcontents

\section{Introduction}
When we look for critical points of a given (smooth) functional, the typical goal is to prove a compactness property for a suitable sequence, usually a Palais-Smale sequence (\cite{PaSm1964}) (or one of its most famous generalizations, the Cerami sequence \cite{Cerami1978}), for a given functional, which is defined in an ambient space endowed with a topology that is poor enough to allow the existence of convergent subsequence, namely proving the validity of the Palais-Smale condition (or Cerami condition), but rich enough to gurantee a good regularity for the functional.

In bounded domains, a classical strategy to recover compactness for Palais-Smale sequences is to apply the Rellich-Kondrachov embedding Theorem for Sobolev spaces. However, compactness property is as important as it is easy to not have it, and reasons for which one has to face a lack of compactness are many: this could happen when dealing with equations where the nonlinearity has Sobolev critical growth (we refer to the pioneering work of Brézis-Nirenberg \cite{BrNi} and the references therein), or with problems considered in the whole space $\R^N$. In this last case, we talk about \textit{entire problems} and the loss of compactness is given by the invariance of the equation under the group of translations. Moreover, in both cases above, Rellich-Kondrachov Theorem can not be used.

To overcome this problem, in 1977 Strauss (\cite{Strauss1977}) was able to generalize the embedding Theorem to the whole space $\R^N$ with $N \geq 2$, as long as one restricts to the subspace of radial functions: the proof relies on a decaying estimates for radial functions. This result has been further generalized in 1983 by Berestycki and Lions (see \cite{BeLi1983-1,BeLi1983-2}).

Therefore, in the entire space the variational scheme can be recovered for a subspace (namely the subspace of radial functions) of the ambient space. However, it turns out that this is not too restrictive, because of another important result due to Palais (see \cite{Palais1979}): the Principle of symmetric criticality. Thanks to this principle, under suitable assumptions it is possibile to show that a critical point for a functional restricted to the subspace of the radial functions is actually a free critical point: in other words, the radiality constraint is a sort of a natural constraint.


Summarizing so far, by Strauss' embedding Theorem and Palais' Principle of symmetric criticality, the whole variational scheme used in bounded domain can be recovered also for entire problems. This fact allowed the study of many equations otherwise difficult to handle, using some of the by-now classic results in Critical Point Theory, such as the Mountain-Pass Theorem by Ambrosetti-Rabinowitz (\cite{AmRa1973}), Linking Theorem by Rabinowitz (\cite{Rabinowitz1978}) or the Nehari manifold method (\cite{Nehari1960}) and in particular, great interest was given to the study of Schr\"odinger type equations of the form
\[
	-\Delta u + V(|x|)u = K(|x|)f(|x|,u), \text{ in } \R^N.
\]
The literature for this equation is very huge and impossible to list completely (and this is not the goal of this paper). However, for our purpose we cite some important results in this direction, due to Benci-Fortunato (see \cite{BeFo2004-1,BeFo2004-2}), where they assume the so called \textit{double-power type growth condition} for the nonlinearity. This assumption has then been considered in some later works by many authors (\cite{Azzollini2008,AzPiPo2011,AzPo2008,BaBeRo2007,BaBeRo2009,BeFo2004-1,BeFo2004-2,BaGuRo2021,BaPiRo2011,BeLi1983-1,BeLi1983-2}) in the study of semilinear Maxwell equations. To better deal with this kind of growth, it was observed that it is convenient to embed the Sobolev ambient space into the sum of weighted Lebesgue spaces $L^p_K$, where the weight $K$ is the one in front of the nonlinearity. A very deep treatment on the sum of weighted Lebesgue spaces has been done in \cite{BaPiRo2011}. Later on, this theory has been generalized by \cite{BaGuRo2015-1}, were the authors obtained a compact embedding results for the radial subsapace into the sum of weighted Lebesgue spaces. Using this embedding, they then showed (see \cite{BaGuRo2015-2}) the validity of a Principle of Symmetric Criticality, proving the existence and the multiplicity of solutions for the following equation
\begin{equation*}
	-\Delta u + V(|x|)u = K(|x|)f(u), \text{ in } \R^N,
\end{equation*}
where $N \geq 3$, $V \geq 0$, $K>0$ and $f \in C(\R)$ is such that $f(0)=0$. We want to point out that in those works, the authors allowed the potential $V$ to vanish at infinity, in the sense that
\begin{equation}
\label{vanishing:potential}
	\lim_{|x| \to +\infty} V(|x|)=0.
\end{equation}

This kind of request is of recent interest: in fact, most of the works made in the previous decades, required potentials that did not vanish to infinity: in fact, if \eqref{vanishing:potential} holds, it can be seen that the natural weighted Sobolev space $H_V^1$ is not contained in $L^q_K$ anymore. That is, the variational scheme become useless. For this reason, the sum of (weighted) Lebesgue spaces seems to be the right way to deal with this kind of problems.

The result proved in \cite{BaGuRo2015-1} by Badiale \textit{et al.} generalizes some previous works by Su and Tian (\cite{SuTi2012}) and Su, Wang and Willem (\cite{SuWaWi2007-1,SuWaWi2007-2}) and it is complementary to the work of Bonheure and Mercuri (\cite{BoMe2011}), where the authors proved a Sobolev embedding into a single weighted Lebesgue space by means of the so-called \textit{Hardy-Dieudonné comparison class}.

As already pointed out above, compact embeddings are really useful to find solutions for a wide class of PDEs: in particular, nonlinear Schr\"odinger equations has been widely studied in the last years both in the local case (\cite{AmFeMa2005,AmMaSe2001,BaWa1995,dPFe1996,Rabinowitz1992}) that in the nonlocal setting (\cite{Ambrosio2016,DPPaVa2013,FeQuTa2012,Secchi2013,Secchi2016}). 
For other papers dealing with vanishing potential see  \cite{AlAlMe2014,AlSo2013,AmRu2006,AmWa2005,CaCePeUb2021}. More recently, a nonlinear Schr\"odinger equation, where the potential $V$ satisfies \eqref{vanishing:potential} was considered in \cite{ApMBSe} on a Cartan-Hadamard manifold.
Later on, other nonlinearities with more general assumptions (like Sobolev subcritical growth) have also been considered. In here, we are going to consider nonlinearity satisfying the \textit{double-power type growth condition} (see below for the formal definition). 

Inspired by \cite{BaGuRo2015-1, BaGuRo2015-2}, in this work we are going to extend their results to the nonlocal case: in particular, we are dealing with the following fractional equation
\begin{equation}
	\label{1}
	(-\Delta)^su + V(|x|)u = K(|x|)f(u), \quad \text{ in } \R^N
\end{equation}
where $N \geq 2$, $V \geq 0$ and $K>0$ are given potentials, and $f$ satisfies suitable hypoteses (see below).

As in the local case, also in the nonlocal one the compactness embedding proof relies on a Strauss type estimate, but, due to the nature of the problem, for weighted Sobolev spaces. This estimate was proved by De N\'{a}poli in \cite{DeNapoli2018}, generalizing the result obtained by Snitzoff in the local case (\cite{Snitzoff2003}). De N\'{a}poli has then used his result to study the symmetry breaking phenomenon for an equation driven by fractional Laplace operator.

Our main result states that, under assumptions to be specified later,  the embedding
\[
	H^s_{V,\rad}(\mathbb{R}^N)  \hookrightarrow L^{q_1}_K (\mathbb{R}^N) + L^{q_2}_K (\mathbb{R}^N)
\]
is compact for suitable $q_1>1$, $q_2 > 1$ (see Theorems \ref{th:comp:emb}, \ref{S0:holdness} and \ref{Sinf:holdness} below).

As remarked by De N\'{a}poli, since the Strauss' type inequality implies the existence of a continuous (outside the origin) representative, his inequality holds for $s>1/2$ only. This could seem to be a strong restriction in the result, but actuallyit  is not. In fact, this restriction is somehow \textit{structural}, in the sense that functions in $H^s (\mathbb{R}^N)$, with $s$ ``small'', do not have  pointwise representatives. This is confirmed by the Sobolev embedding theorem, which states that $H^s(\mathbb{R}^N)$ is continuously embedded into $C^{0,s-N/2}(\R^N)$ (see \cite{Ambrosio2020,DiPaVa2012}). In the nonlocal case decaying estimates are not expected if $s \to 0^+$, because the space $H^2(\mathbb{R}^N)$ gets closer and closer to $L^2(\mathbb{R}^N)$. Moreover, although some results for $s<1/2$ could be interesting, getting closer to $0$ means we are going farest from a  space of differentiable functions, and this makes the analysis more delicate and subtle.

Going back to results of this paper, exploiting the nonlocal Sobolev compact embedding, we are able to provide existence of solutions for equation \eqref{1}: in particular, the solutions turn out to be of Mountain-Pass type. Additionally, if the nonlinearity enjoys a symmetry assumption (namely if $f$ is odd) then we can prove that equation \eqref{1} has infinitely many solutions: this can be done using the $\Z^2$-version of the Mountain-Pass (\cite{AmRa1973},\cite{Rabinowitz1974}).

\bigskip

The structure of this paper is as follows. In Section \ref{Preliminaries:Sec} we recall the definitions of the fractional Laplacian and the Sobolev space $H^s(\mathbb{R}^N)$. Then we introduce the weighted spaces we will mainly work in, also stating some related properties and inequalities we will need in the paper. Section \ref{Compacntess:Sec} is devoted to the proof of our compactness result (Theorem \ref{th:comp:emb}). Since the assumptions of this result are not straightforward to check in the applications, in Section \ref{Operative:way:Sec} we provide two theorems (Theorem \ref{S0:holdness} and Theorem \ref{Sinf:holdness}) which allow to obtain the compactness via easier hypotheses. In Section \ref{Existence:Sec} we apply our previous results, stating and proving an existence result for the weighted fractional Schr\"odinger equation \eqref{1}. Finally, in Section \ref{Examples:Sec} we provide some examples of concrete situations allowed by our result.

%

\section{Preliminaries and variational setting}
\label{Preliminaries:Sec}
In this Section we recall some known facts about fractional Laplacian and we introduce the spaces we are going to deal with. We start with the operator, focusing only on the facts we need and referring the reader to \cite{DiPaVa2012},\cite{MoRaSe2016} and \cite{Ambrosio2020} for a complete treatment.

\bigskip

Let $0 < s < 1$ and let $u$ be a rapidly decreasing function, i.e., 
\begin{displaymath}
	u \in \cS=\left\{u \in C^{\infty}(\R^N) : \sup_{x \in \R^N}|x^{\alpha}D^{\beta}u(x)| < +\infty,\ \hbox{for all multi-indices $\alpha$, $\beta$} \right\}.
\end{displaymath}
 The fractional Laplace operator is defined as
\begin{equation}
\label{frac:lap}
\begin{aligned}
	(-\Delta)^su(x) &= \frac{C(N,s)}{2}\lim_{\eps \to 0^+}\int_{B^c_{\eps}(x)} \frac{u(x)-u(y)}{|x-y|^{N+2s}} \, dy \\
	&= \frac{C(N,s)}{2}\operatorname{P.V.} \int_{\R^N} \frac{u(x)-u(y)}{|x-y|^{N+2s}} \, dy,
\end{aligned}
\end{equation}
where $\operatorname{P.V.}$ denotes the Cauchy principal value and
\[
	C(N,s) = \left(\int_{\R^N} \dfrac{1-\cos(\zeta_1)}{|\zeta|^{N+2s}} \, d\zeta\right)^{-1} = 2^{-(N+2s)/2+1}\pi^{-\frac{N}{2}}2^{2s}\frac{s(1-s)}{\Gamma(2-s)}.
\]
is a normalization constant (see \cite[Proposition 3.3]{DiPaVa2012}).

The operator defined in \eqref{frac:lap} is stricly related to the fractional Sobolev space defined as
\[
	H^s(\R^N)=\left\{u \in L^2(\R^N) : [u]_{H^s(\R^N)} < +\infty \right\},
\]
where
\[
	[u]_{H^s(\R^N)}=\left(\frac{C(N,s)}{2}\int_{\R^N}\int_{\R^N} \frac{|u(x)-u(y)|^2}{|x-y|^{N+2s}} \, dx \, dy \right)^{\frac{1}{2}}
\]
is the Gagliardo seminorm. This space is a  Hilbert space with respect to the inner product
\[
	(u,v)_{H^s(\R^N)} = \int_{\R^N} u(x)v(x) \, dx + \frac{C(N,s)}{2}\int_{\R^N \times \R^N} \frac{(u(x)-u(y))(v(x)-v(y))}{|x-y|^{N+2s}} \, dx \, dy
\]
for every $u,v \in H^s(\R^N)$. The above inner product is associated to the  norm 
\begin{displaymath}
	\|u\|^2_{H^s(\R^N)} = \|u\|^2_{L^2(\R^N)} + [u]^2_{H^s(\R^N)},
\end{displaymath}

\begin{Rem}
	To ease notation, we will also write $H^s$, $L^p$, etc instead of $H^s(\mathbb{R}^N)$, $L^p(\mathbb{R}^N)$, etc. 
\end{Rem}
 An equivalent definition of $H^s(\R^N)$ can be given via Fourier transform (see \cite{DiPaVa2012}, Section 3.1).


Now, we are going to introduce the weighted spaces. For a given measurable function $V\geq 0$, we define the fractional weighted Sobolev space 
\[
	H^s_V(\R^N) = \left\{u \in \dot{H}^s(\R^N) : \int_{\R^N} V(|x|)|u|^2 \, dx < \infty\right\}
\]
endowed with the norm defined by
\[
	\|u\|^2:= \|u\|^2_{H^s_V(\R^N)} = [u]^2_{H^s(\R^N)} + \int_{\R^N} V(|x|)|u|^2 \, dx,
\]
where 
\[
\dot{H}^s(\R^N):=\left\{u \in L^{2^*_s}(\R^N) : [u]_{H^s} < \infty \right\},\qquad 2^*_s:=\frac{2N}{N-2s}. 
\]
It is well known that $C^{\infty}_c(\R^N)$ is dense in $\dot{H}^s(\R^N)$. For our purpose, we need the subspace of the radial functions of this space, that is
\[
	H^s_{V,\rad}(\R^N) = \left\{u \in H^s_V(\R^N) \mid u(x)=u(\vert x \vert) \right\}.
\] 

Moreover, given a measurable function $K>0$, we introduce the weighted Lebesgue space
\[
	L^q_K (\mathbb{R}^N) :=L^q\left(\R^N, K(|x|)\,dx\right)
\]
equipped with the norm
\[
	\|u\|_{L^q_K}=\left(\int_{\R^N}K(|x|)|u(x)|^q \, dx \right)^{\frac{1}{q}},
\]
and we recall that
\[
	L^{q_1}_K (\mathbb{R}^N) + L^{q_2}_K (\mathbb{R}^N) :=\left\{u=u_1+u_2 : u \in L^{q_1}_K(\R^N), u_2 \in L^{q_2}_K(\R^N) \right\}
\]
is a Banach space with respect to the norm
\[
	\|u\|_{L^{q_1}_K + L^{q_2}_K}:= \inf_{u=u_1+u_2}\max\left\{\|u_1\|_{L^{q_1}_K},\|u_2\|_{L^{q_2}_K} \right\}.
\]
For a precise description on the sum of Lebesgue spaces see \cite{BaPiRo2011}.

Concerning the potentials $V$ and $K$, we will consider the following assumptions:
\begin{itemize}
\item[(V)] $V \colon (0,+\infty) \to [0,+\infty)$ is a continuous function;
\item[(K)] $K \colon (0,+\infty) \to (0,+\infty)$ is a continuous function.
\end{itemize}


We conclude this Section by recalling two estimates that will be very helpful in the upcoming computations. 
The first one provides a control on the fractional critical norm. This result is contained in \cite[Theorem 6.5]{DiPaVa2012}, and it holds for measurable and compactly supported functions. Exploiting the density of $C^{\infty}_c(\R^N)$ in $\dot{H}^s(\R^N)$, we will state it in the case of functions belonging to $\dot{H}^s(\R^N)$.
\begin{Th}
	\label{crit_est}
	Let $0 < s < 1$. There exists constant $S:=S(N,s)>0$ such that
	\[
	\|u\|_{L^{2_s^*}} \leq S[u]_{H^s}, \text{ for all } u \in \dot{H}^s(\R^N).
	\]
\end{Th}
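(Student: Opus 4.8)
The plan is to start from the version of the inequality that is already available for compactly supported functions and then to upgrade it to the whole homogeneous space $\dot{H}^s(\R^N)$ by a density argument, rather than to reprove the fractional Sobolev inequality from scratch. Concretely, \cite[Theorem 6.5]{DiPaVa2012} furnishes a constant $S=S(N,s)>0$ such that $\|v\|_{L^{2_s^*}} \le S[v]_{H^s}$ holds for every measurable, compactly supported $v$ with $[v]_{H^s}<\infty$; in particular the estimate is valid for every $v \in C^\infty_c(\R^N)$. This is the only nontrivial analytic input, and I would simply invoke it.

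Next, fix $u \in \dot{H}^s(\R^N)$. By the density of $C^\infty_c(\R^N)$ in $\dot{H}^s(\R^N)$ recalled above, choose $(u_n)\subset C^\infty_c(\R^N)$ with $[u_n-u]_{H^s}\to 0$. Since each difference $u_n-u_m$ is again smooth and compactly supported, the inequality applies to it and gives $\|u_n-u_m\|_{L^{2_s^*}} \le S[u_n-u_m]_{H^s}$. As $(u_n)$ is Cauchy for the seminorm $[\cdot]_{H^s}$, it is therefore Cauchy in $L^{2_s^*}(\R^N)$, hence converges there to some limit.

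The one point requiring care is the identification of this $L^{2_s^*}$ limit with $u$ itself, and this is the step I expect to be the main (though modest) obstacle, since seminorm convergence alone does not immediately control the full $L^{2_s^*}$ behaviour. I would handle it by passing to a subsequence along which $u_n$ converges almost everywhere to its $L^{2_s^*}$ limit; using that the standard construction of the approximants by truncation and mollification also yields convergence in $L^{2_s^*}$ (so that $u \in L^{2_s^*}$ is matched almost everywhere), uniqueness of the a.e.\ limit forces the $L^{2_s^*}$ limit to equal $u$.

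Finally, apply the compactly supported inequality to each $u_n$, namely $\|u_n\|_{L^{2_s^*}} \le S[u_n]_{H^s}$, and let $n\to\infty$: the left-hand side tends to $\|u\|_{L^{2_s^*}}$ by the $L^{2_s^*}$ convergence just established, while the right-hand side tends to $S[u]_{H^s}$ by seminorm convergence. This yields $\|u\|_{L^{2_s^*}} \le S[u]_{H^s}$ with the \emph{same} constant $S$, completing the proof. Alternatively, one could bypass the explicit limit identification by combining a.e.\ convergence along a subsequence with lower semicontinuity of the $L^{2_s^*}$ norm (Fatou's lemma), obtaining $\|u\|_{L^{2_s^*}} \le \liminf_n \|u_n\|_{L^{2_s^*}} \le \liminf_n S[u_n]_{H^s} = S[u]_{H^s}$, which gives the same bound.
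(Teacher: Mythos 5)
Your proposal is correct and follows exactly the route the paper itself takes: the paper does not write out a proof but simply cites \cite[Theorem 6.5]{DiPaVa2012} for compactly supported functions and invokes the density of $C^{\infty}_c(\R^N)$ in $\dot{H}^s(\R^N)$ to extend the inequality. Your careful treatment of identifying the $L^{2_s^*}$ limit (or the Fatou's lemma shortcut) merely fills in the details the paper leaves implicit, with the same constant $S$.
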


\begin{Rem} \normalfont
	The Sobolev constant $S$ is explicit and optimal and is given by the formula (cf. \cite[Theorem 1.1]{CoTa2004} and \cite{PaPi2014}):
	\begin{equation}
		\label{Sobolev}
			S:=S(N,s)=\left(\frac{1}{2^{2s}\pi^s} \frac{\Gamma(\frac{N-2s}{2})}{\Gamma(\frac{N+2s}{2})}\left[\frac{\Gamma(N)}{\Gamma(\frac{N}{2})} \right]^{2s/N} \right)^{2^*_s/2}.
	\end{equation}
\end{Rem}

\begin{Rem}\label{Lp_loc} \normalfont
	As a consequence of Theorem \ref{crit_est}, the space $H^s_V(\R^N)$ is continuously embedded both into $L^{2_s^*}(\R^N)$ and into $L^p_{loc}(\R^N)$ for every $p\in[1,2_s^*]$.
\end{Rem}

The next result is due to P.L. De N\'{a}poli \cite{DeNapoli2018} and provides a Strauss-type inequality in the fractional envirorment. Here we state it in the particular case where $a=0$ and $q=2^*_s$ (so that the exponent $\sigma$ of \cite[Theorem 3.1]{DeNapoli2018} becomes $\sigma=(N-2s)/2$).
\begin{Th}
\label{Strauss_Th}
Let $s > 1/2$ and set $\theta=(N-2s)/(2sN-2s)$. 
Then there exists a constant $C(N,s)$ such that for every $u \in H^s_{0,\rad}(\R^N)$ we have
\[
	|u(x)| \leq \frac{C(N,s)}{\vert x \vert^\frac{N-2s}{2}} [u]^{\theta}_{H^s}\|u\|^{1-\theta}_{L^{2^*_s}}
	\quad\textrm{for almost every }x\in\R^N.
\]
In particular, for all $u \in H^s_{V,\rad}(\R^N)$ we get
\begin{equation}
\label{Strauss_ineq}
	|u(x)| \leq \frac{C(N,s)}{\vert x \vert^\frac{N-2s}{2}} \|u\|_{H^s_V}
	\quad\textrm{for almost every }x\in\R^N.
\end{equation}
\end{Th}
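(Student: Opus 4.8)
The plan is to establish the pointwise decay estimate first and then read off the ``in particular'' inequality as an immediate corollary. By the density of $C_c^\infty(\R^N)$ in $\dot{H}^s(\R^N)$ it suffices to prove the first inequality for smooth, radial, compactly supported $u$ and to pass to the limit, the estimate being stable because its right-hand side depends only on $[u]_{H^s}$ and $\|u\|_{L^{2^*_s}}$, both continuous for the $\dot{H}^s$-topology. Writing $u(x)=g(|x|)$ for the radial profile $g$, the guiding idea is to reduce the whole statement to a one-dimensional weighted inequality: integrating out the angular variables gives $\|u\|_{L^{2^*_s}}^{2^*_s}=\omega_{N-1}\int_0^\infty|g(r)|^{2^*_s}r^{N-1}\,dr$ and $[u]_{H^s}^2=\int_0^\infty\int_0^\infty|g(r)-g(\rho)|^2\,\mathcal K(r,\rho)\,dr\,d\rho$, where $\mathcal K$ is obtained by integrating $|x-y|^{-(N+2s)}$ over the two spheres of radii $r$ and $\rho$.

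First I would fix $R=|x|>0$ and control $|g(R)|$ by comparing it with a mean value over a dyadic annulus $A_R=\{R\le|y|\le 2R\}$. The elementary split $|g(R)|\le \frac{1}{|A_R|}\int_{A_R}|u|\,dy+(\text{oscillation of }u\text{ on }A_R)$ produces two contributions. For the mean term, H\"older's inequality gives $\frac{1}{|A_R|}\int_{A_R}|u|\,dy\le |A_R|^{-1/2^*_s}\|u\|_{L^{2^*_s}}\le C\,R^{-(N-2s)/2}\|u\|_{L^{2^*_s}}$, which already carries the correct decay rate and the correct power of $\|u\|_{L^{2^*_s}}$. For the oscillation term, using $|x-y|\le 4R$ on $A_R\times A_R$ one bounds $\int_{A_R}\int_{A_R}|u(x)-u(y)|^2\,dx\,dy\le (4R)^{N+2s}\frac{2}{C(N,s)}[u]_{H^s}^2$, so that, in terms of the profile, the oscillation of $g$ on $[R,2R]$ is controlled by a suitable power of $R$ times $[u]_{H^s}$.

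The heart of the matter is to upgrade this averaged (Poincar\'e-type) control into a genuine pointwise bound and to keep track of the exact powers. Here radial symmetry is essential: the restriction of the Gagliardo energy to $A_R$, rewritten through $\mathcal K$, is a one-dimensional fractional seminorm of $g$ of order $s$ on $[R,2R]$, and since $s>1/2$ this interval carries the one-dimensional fractional Morrey embedding $H^s\hookrightarrow C^{0,s-1/2}$, which delivers the pointwise value $g(R)$; this is exactly the step where the hypothesis $s>1/2$ is unavoidable, in agreement with the discussion in the Introduction. Interpolating the two contributions — equivalently, optimizing the construction over the width of the annulus, that is over a free dilation parameter — balances the $\|u\|_{L^{2^*_s}}$ term against the $[u]_{H^s}$ term and produces precisely the weight $|x|^{-(N-2s)/2}$ together with the exponents $\theta=(N-2s)/(2sN-2s)$ and $1-\theta$. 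I expect the main obstacle to be the careful reduction of the $N$-dimensional double integral to the one-dimensional weighted seminorm, namely a sharp lower bound on the kernel $\mathcal K(r,\rho)$ when $r,\rho$ lie in a common dyadic annulus, since the angular integration is delicate; the interpolation and scaling bookkeeping that fixes $\theta$ is then routine.

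Finally, the ``in particular'' statement follows at once. Since $\|u\|_{H^s_V}^2=[u]_{H^s}^2+\int_{\R^N}V(|x|)|u|^2\,dx\ge[u]_{H^s}^2$ we have $[u]_{H^s}\le\|u\|_{H^s_V}$, while Theorem \ref{crit_est} gives $\|u\|_{L^{2^*_s}}\le S[u]_{H^s}\le S\|u\|_{H^s_V}$. Substituting into the pointwise estimate,
\[
|u(x)|\le\frac{C(N,s)}{|x|^{(N-2s)/2}}[u]_{H^s}^{\theta}\|u\|_{L^{2^*_s}}^{1-\theta}\le\frac{C(N,s)\,S^{1-\theta}}{|x|^{(N-2s)/2}}\|u\|_{H^s_V},
\]
and absorbing $S^{1-\theta}$ into the constant yields \eqref{Strauss_ineq}.
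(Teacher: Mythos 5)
Your proposal is correct, but it takes a genuinely different route from the paper, which in fact offers no proof of the main estimate at all: it quotes it from De N\'apoli \cite{DeNapoli2018} (Theorem 3.1 there, specialized to $a=0$ and $q=2^*_s$, so that $\sigma=(N-2s)/2$), and the only part that is the paper's own is the ``in particular'' clause, where your derivation --- $[u]_{H^s}\leq\|u\|_{H^s_V}$ combined with Theorem \ref{crit_est} --- is exactly the intended one. Your self-contained annulus argument for the main inequality is sound, and the bookkeeping you call routine does close, including the kernel step you flag as the main obstacle: on an annulus $A=\{R\leq|y|\leq(1+\delta)R\}$ of free relative width $\delta\in(0,1]$, H\"older gives a mean term at most $C\,\delta^{-(N-2s)/(2N)}R^{-(N-2s)/2}\|u\|_{L^{2^*_s}}$; restricting the spherical integral defining $\mathcal{K}(r,\rho)$ to the cap of aperture $|r-\rho|/R$ about $e_1$, where $|r e_1-\rho\omega|\leq C|r-\rho|$, yields $\mathcal{K}(r,\rho)\geq c\,R^{N-1}|r-\rho|^{-(1+2s)}$, so the Gagliardo energy restricted to $A\times A$ dominates $c\,R^{N-1}[g]^2_{H^s(I)}$ with $I=[R,(1+\delta)R]$, and the one-dimensional Morrey embedding (this is where $s>1/2$ enters) gives an oscillation term at most $C(\delta R)^{s-1/2}R^{-(N-1)/2}[u]_{H^s}=C\,\delta^{s-1/2}R^{-(N-2s)/2}[u]_{H^s}$, since $s-\frac{N}{2}=-\frac{N-2s}{2}$; balancing the two powers of $\delta$ then produces precisely $\theta=\frac{N-2s}{2s(N-1)}=\frac{N-2s}{2sN-2s}$, and the constraint $\delta\leq 1$ is harmless because $\|u\|_{L^{2^*_s}}\leq S[u]_{H^s}$ caps the optimal $\delta$ by a constant. (Your intermediate $L^2$ oscillation bound with the factor $(4R)^{N+2s}$ is too weak on its own and is superseded by this kernel argument, as your step four correctly recognizes.) What each approach buys: the paper's citation is shorter and inherits De N\'apoli's more general weighted statement with power weights and general $q$, while your argument is self-contained, makes transparent the structural role of $s>1/2$ via the embedding $H^s\hookrightarrow C^{0,s-1/2}$ on intervals, and explains the otherwise opaque exponent $\theta$ as the outcome of optimizing the annulus width.
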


\begin{Rem}\label{cpt_loc} \normalfont
	By a well know compactness lemma due to Strauss \cite{Strauss1977}, it easy to exploit the embeddings of Remark \ref{Lp_loc} and the pointwise estimate \eqref{Strauss_ineq} to deduce that the space $H^s_{V,\rad}(\R^N)$ is compactly embedded into $L^p_{loc}(\R^N)$ for every $p\in(1,2_s^*)$.
\end{Rem}

\section{Compactness}
\label{Compacntess:Sec}

We divide this Section in two parts: in the first one we provide the continuous embedding of $H^s_{V,\rad}(\R^N)$ into $L^{q_1}(\R^N) + L^{q_2}(\R^N)$, for some $q_1,q_2 > 1$. In the second part, we will show that this embedding is actually compact. The proofs rely on a careful analysis of the variational problems
\begin{equation}
\label{S0}
	\cS_0(q,R) = \sup\limits_{\substack{u \in H^s_{V,\rad} \\ \|u\|=1}} \int_{B_R} K(|x|)|u|^q \, dx
\end{equation}
\begin{equation}
\label{Sinf}
	\cS_{\infty}(q,R) = \sup\limits_{\substack{u \in H^s_{V,\rad} \\ \|u\|=1}} \int_{B^c_R} K(|x|)|u|^q \, dx.
\end{equation}
Observe that $\cS_0(q,R)$ is nondecreasing and $\cS_{\infty}(q,R)$ is nonincreasing in $R$.

We are going to show that if both \eqref{S0} and \eqref{Sinf} are finite, then we get the continuous embedding of $H^s_{V,\rad}(\R^N)$ into $L^{q_1}_K(\R^N) + L^{q_2}_K(\R^N)$, while we  the same embedding is compact provided that they are both vanishing (see Theorem \ref{th:cont:emb} and \ref{th:comp:emb} for the precise statements).

We begin by proving the following lemma.
\begin{Lem}
\label{annulus:comp:emb}
Let $s>1/2$, $R > r > 0$ and $q \in (1,+\infty)$. Then there exist a constant $\tilde C= \tilde C(N,R,r,q,s) > 0$ and two numbers $t>1$ and $1<\tilde q<q$ such that
\begin{equation}
\label{ann:estimate}
	\frac{\int_{B_R \setminus B_r} K(|x|)|u|^q \, dx}{\tilde C\|K(|\cdot|)\|_{L^t(B_R \setminus B_r)}} \leq \left(\int_{B_R \setminus B_r} |u|^2 \, dx \right)^{\frac{\tilde{q}-1}{2}}\|u\|^{1+q-\tilde{q}}\
\end{equation}
for all $u \in H^s_{V,\rad}(\R^N)$.
\end{Lem}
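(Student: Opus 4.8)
The plan is to exploit the fact that the annulus $B_R\setminus B_r$ is bounded away from the origin, so that the Strauss-type pointwise bound of Theorem \ref{Strauss_Th} becomes a genuine $L^\infty$ estimate there. Since $|x|\ge r$ on $B_R\setminus B_r$, inequality \eqref{Strauss_ineq} yields
\[
\|u\|_{L^\infty(B_R\setminus B_r)}\le \frac{C(N,s)}{r^{(N-2s)/2}}\,\|u\|=:C'\|u\|
\]
for every $u\in H^s_{V,\rad}(\R^N)$, and this is the only step that uses $s>1/2$ (it is where the pointwise representative is needed). With this uniform bound in hand, the remaining work is a pure interpolation between $L^\infty$, $L^2$ and the weight, carried out by Hölder's inequality.

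Then I would peel off a power of $u$ using the $L^\infty$ bound and estimate the rest with Hölder. Writing $a:=1+q-\tilde q$ (with $\tilde q$ to be fixed), I would split $|u|^q=|u|^a\,|u|^{q-a}$ and bound the first factor pointwise:
\[
\int_{B_R\setminus B_r}K(|x|)|u|^q\,dx\le (C'\|u\|)^{a}\int_{B_R\setminus B_r}K(|x|)|u|^{q-a}\,dx.
\]
Applying Hölder to the last integral with conjugate exponents $t,t'$ chosen so that $(q-a)t'=2$, that is $t'=2/(\tilde q-1)$ and $t=2/(3-\tilde q)$, turns the $u$-factor into $\big(\int_{B_R\setminus B_r}|u|^2\big)^{1/t'}$ with $1/t'=(\tilde q-1)/2$, while the weight factor becomes exactly $\|K(|\cdot|)\|_{L^t(B_R\setminus B_r)}$. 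Substituting $a=1+q-\tilde q$ and collecting the constants into $\tilde C:=(C')^{1+q-\tilde q}$ reproduces \eqref{ann:estimate}.

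The only thing to verify is that the exponents are admissible: I need $t,t'\in(1,+\infty)$, which holds precisely when $\tilde q-1\in(0,2)$, i.e. $\tilde q\in(1,3)$; together with the requirement $\tilde q<q$ this forces the choice $\tilde q\in(1,\min\{q,3\})$, a nonempty interval since $q>1$. Finiteness of $\|K(|\cdot|)\|_{L^t(B_R\setminus B_r)}$ is immediate from assumption (K): $K$ is continuous on $(0,+\infty)$, hence bounded on the compact annulus $\overline{B_R}\setminus B_r$, so it belongs to every $L^t$ there. There is no genuine obstacle in this lemma; its whole content is the observation that the Strauss inequality trivializes on a set bounded away from the origin, reducing the weighted estimate to elementary Hölder bookkeeping, and the only care required is the selection of $\tilde q$ (and hence of $t$) in the admissible range.
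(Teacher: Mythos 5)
Your proof is correct, and it takes a genuinely different (and shorter) route than the paper. The paper's proof never uses the Strauss bound as a standalone $L^\infty$ estimate at the outset: it first applies H\"older with exponents $2^*_s,(2^*_s)'$ and invokes the fractional Sobolev inequality of Theorem \ref{crit_est} to produce one factor $S\|u\|$ from $\|u\|_{L^{2^*_s}}\leq S[u]_{H^s}$, then applies H\"older a second time with $l=t/(2^*_s)'$ to isolate $\|K(|\cdot|)\|_{L^t}$, and only at the end uses the Strauss inequality \eqref{Strauss_ineq} to reduce $|u|^{2(q-1)/(\tilde q-1)}$ to $|u|^2$ on the annulus. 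This forces the coupled choice $\tilde q=2\left(1+\frac{s}{N}-\frac{1}{t}\right)$ with $t>(2^*_s)'$ (and $t<\frac{2N}{2N+2s-Nq}$ when that constraint is active), so the paper's $\tilde q$ ranges over $(1,\min\{q,2+2s/N\})$. You instead observe that on $B_R\setminus B_r$ the Strauss inequality is an honest $L^\infty$ bound, peel off $|u|^{1+q-\tilde q}$ pointwise, and use a single H\"older step with the decoupled exponents $t'=2/(\tilde q-1)$, $t=2/(3-\tilde q)$, giving $\tilde q\in(1,\min\{q,3\})$; your bookkeeping is sound ($a=1+q-\tilde q>1>0$, $(q-a)t'=2$, $1/t'=(\tilde q-1)/2$, and $t>1$ iff $\tilde q>1$), and finiteness and positivity of $\|K(|\cdot|)\|_{L^t(B_R\setminus B_r)}$ follow from (K) on the compact annulus exactly as you say. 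What each approach buys: yours avoids Theorem \ref{crit_est} entirely, exposes that the only role of $s>1/2$ and radiality is the $L^\infty$ trivialization via Theorem \ref{Strauss_Th}, and yields a slightly wider admissible range for $\tilde q$; the paper's two-H\"older scheme is structurally parallel to Lemma \ref{est_1} (its $\beta=0$ case), which is the template it reuses on unbounded regions where a pure $L^\infty$ bound would not control integrability — a consideration that is moot on the annulus. Since the downstream uses (Theorems \ref{th:cont:emb} and \ref{th:comp:emb}, Lemma \ref{Lemma:lower:bound}) only need the existence of some $\tilde q\in(1,q)$ producing the $L^2$ factor, combined with the compact embedding into $L^2_{\loc}$ of Remark \ref{cpt_loc}, your lemma is a fully adequate drop-in replacement.
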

\begin{proof}
Define 
\[
\tilde q = 2\left( 1+ \frac{s}{N} - \frac{1}{t} \right) 
\]
and fix $t>(2^*_s)'$ such that $1<\tilde q<q$. This choice is possible under our assumptions. Indeed, $\tilde{q}>1$ is equivalent to $t>(2_s^*)'$. The inequality $\tilde{q}<q$ is equivalent to $(2N+2s-Nq)t < 2N$. Supposing that $2N+2s-Nq>0$ (otherwise this condition is obviously satisfied), we get the condition
\begin{displaymath}
	t< \frac{2N}{2N+2s-Nq}.
\end{displaymath}
However,
\begin{displaymath}
	\frac{2N}{2N+2s-Nq} > \left(2_s^* \right)'
\end{displaymath}
as soon as $q>1$.

By Theorem~\ref{crit_est} andthe  H\"older inequality first with exponents $2^*_s$ and $(2^*_s)'$, and then with exponents $l:=\frac{t}{(2^*_s)'}$ and $l'=\frac{t(N+2s)}{t(N+2s)-2N}$, we get
\begin{align*}
	&\int_{B_R \setminus B_r} K(|x|)|u|^{q-1}|u| \, dx \\
	& \leq \left(\int_{B_R \setminus B_r}\left(K(|x|)|u|^{q-1}\right)^{(2^*_s)'} \, dx \right)^{\frac{1}{(2^*_s)'}} \left(\int_{B_R \setminus B_r}|u|^{2^*_s} \, dx \right)^{\frac{1}{2^*_s}}\\
	& \leq \left(\int_{B_R \setminus B_r}\left(K(|x|)|u|^{q-1}\right)^{(2^*_s)'} \, dx \right)^{\frac{1}{(2^*_s)'}} S[u]_{H^s} \\
	& \leq \left[\left(\int_{B_R \setminus B_r}(K(|x|)^{(2^*_s)'l} \, dx \right)^{\frac{1}{l}}\left(\int_{B_R \setminus B_r}|u|^{(q-1)(2^*_s)'l'} \, dx \right)^{\frac{1}{l'}}\right]^{\frac{1}{(2^*_s)'}}S\|u\|.
\end{align*}
Since $(2^*_s)'l'=\frac{2Nt}{Nt+2st-2N}=\frac{2}{\tilde{q}-1}$, we then obtain
\begin{align*}
	&\int_{B_R \setminus B_r} K(|x|)|u|^{q} \, dx \\
	&\leq S \left[\left(\int_{B_R \setminus B_r}(K(|x|)^t \, dx \right)^{\frac{(2^*_s)'}{t}}\left(\int_{B_R \setminus B_r}|u|^{(q-1)\frac{2}{\tilde{q}-1}} \, dx \right)^{\frac{\tilde{q}-1}{2}(2^*_s)'}\right]^{\frac{1}{(2^*_s)'}}\|u\| \\
	& \leq S \|K(|\cdot|)\|_{L^t(B_R \setminus B_r)}  \left(\int_{B_R \setminus B_r} |u|^{2\frac{q-1}{\tilde{q}-1}} \, dx \right)^{\frac{\tilde{q}-1}{2}} \|u\| .
\end{align*}
%
%
Since $\tilde{q} < q$, using \eqref{Strauss_ineq} we now have
\begin{align*}
	&\int_{B_R \setminus B_r} K(|x|)|u|^q \, dx \leq S \|K(|\cdot|)\|_{L^t(B_R \setminus B_r)}  \left(\int_{B_R \setminus B_r} |u|^{2\frac{q-1}{\tilde{q} -1}-2}|u|^2 \, dx \right)^{\frac{\tilde{q} -1}{2}} \|u\| \\
	& \leq S \|K(|\cdot|)\|_{L^t(B_R \setminus B_r)}  \left(\int_{B_R \setminus B_r} \left(C(N,s)\|u\|r^{-\frac{N-2}{2}} \right)^{2\frac{q-1}{\tilde{q} -1}-2}|u|^2 \, dx \right)^{\frac{\tilde{q} -1}{2}} \|u\| \\
	& = S \|K(|\cdot|)\|_{L^t(B_R \setminus B_r)}  \left(C(N,s)\|u\|r^{-\frac{N-2}{2}} \right)^{q-\tilde{q} } \left(\int_{B_R \setminus B_r} |u|^2 \, dx \right)^{\frac{\tilde{q} -1}{2}} \|u\| \\
	& = \tilde C(N,R,r,q,s,t)\|K(|\cdot|)\|_{L^t(B_R \setminus B_r)} \left(\int_{B_R \setminus B_r} |u|^2 \, dx \right)^{\frac{\tilde{q} -1}{2}} \|u\|\|u\|^{q-\tilde{q} }
\end{align*}
and the proof is  complete.
\end{proof}

\subsection{Continuous embedding}
We are ready to prove a first embedding result.
\begin{Th}
\label{th:cont:emb}
Let $N \geq 2$, $s>1/2$, $q_1>1$, $q_2 > 1$ and $V$ and $K$ as in (V) and (K). If
\begin{equation}\tag{$\cS'$}
\label{S':hyp}
	\cS_0(q_1,R_1) < +\infty \text{ and } \cS_{\infty}(q_2,R_2) < +\infty \text{ for some } R_1,R_2 > 0,
\end{equation}
then $H^s_{V,\rad}(\R^N)$ is continuously embedded into $L^{q_1}_K(\R^N) + L^{q_2}_K(\R^N)$.
\end{Th}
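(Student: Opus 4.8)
The plan is to produce, for each $u \in H^s_{V,\rad}(\R^N)$, an explicit splitting $u = u_1 + u_2$ obtained by cutting at a single radius, and to control the two pieces through the quantities $\cS_0$ and $\cS_{\infty}$. First I would reduce to a common radius by setting $\bar R := \max\{R_1,R_2\}$. By the homogeneity of the quotients defining \eqref{S0} and \eqref{Sinf}, finiteness of $\cS_0(q_1,\bar R)$ and $\cS_{\infty}(q_2,\bar R)$ is exactly what is needed, since it gives
\[
	\int_{B_{\bar R}} K(|x|)|u|^{q_1}\,dx \le \cS_0(q_1,\bar R)\,\|u\|^{q_1},
	\qquad
	\int_{B_{\bar R}^c} K(|x|)|u|^{q_2}\,dx \le \cS_{\infty}(q_2,\bar R)\,\|u\|^{q_2}
\]
for every $u$. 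Because $\cS_{\infty}(\cdot,R)$ is nonincreasing in $R$ and $\bar R \ge R_2$, I immediately obtain $\cS_{\infty}(q_2,\bar R) \le \cS_{\infty}(q_2,R_2) < +\infty$.

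Second, I would establish $\cS_0(q_1,\bar R) < +\infty$. If $\bar R = R_1$ this is the hypothesis \eqref{S':hyp}. The genuine difficulty appears only when $R_1 < R_2 = \bar R$, because then the hypothesis controls the weighted $L^{q_1}$-mass only on $B_{R_1}$ and leaves the annulus $B_{R_2}\setminus B_{R_1}$ uncontrolled; this is the main obstacle, and it is precisely where Lemma~\ref{annulus:comp:emb} is used. Applying it with $q=q_1$, $r=R_1$, $R=R_2$, and combining it with the continuous local embedding $H^s_{V,\rad}(\R^N)\hookrightarrow L^2_{\loc}(\R^N)$ of Remark~\ref{Lp_loc} (recall $2\le 2^*_s$), I can bound $\int_{B_{R_2}\setminus B_{R_1}}|u|^2\,dx \le C\|u\|^2$ and hence
\[
	\int_{B_{R_2}\setminus B_{R_1}} K(|x|)|u|^{q_1}\,dx \le C'\|u\|^{q_1},
\]
where the exponents in the lemma combine to give exactly $\|u\|^{q_1}$. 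Adding this to the $B_{R_1}$-bound coming from $\cS_0(q_1,R_1)$ shows $\cS_0(q_1,\bar R) < +\infty$, thus filling the gap.

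Finally, I would choose the decomposition $u_1 := u\,\chi_{B_{\bar R}}$ and $u_2 := u\,\chi_{B_{\bar R}^c}$. The two displayed bounds yield $u_1 \in L^{q_1}_K$ with $\|u_1\|_{L^{q_1}_K}\le \cS_0(q_1,\bar R)^{1/q_1}\|u\|$ and $u_2 \in L^{q_2}_K$ with $\|u_2\|_{L^{q_2}_K}\le \cS_{\infty}(q_2,\bar R)^{1/q_2}\|u\|$. By the very definition of the norm on the sum space, testing the infimum with this particular decomposition gives
\[
	\|u\|_{L^{q_1}_K + L^{q_2}_K} \le \max\bigl\{\|u_1\|_{L^{q_1}_K},\,\|u_2\|_{L^{q_2}_K}\bigr\} \le C\,\|u\|,
\]
with $C = \max\{\cS_0(q_1,\bar R)^{1/q_1},\,\cS_{\infty}(q_2,\bar R)^{1/q_2}\}$, which is the asserted continuous embedding. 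I expect the monotonicity reduction and the gap-filling step via Lemma~\ref{annulus:comp:emb} to be the only substantive points, the rest being a direct unwinding of the definition of the sum norm.
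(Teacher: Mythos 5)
Your proof is correct and follows essentially the same route as the paper's: the ball is controlled by $\cS_0(q_1,R_1)$, the exterior by $\cS_{\infty}(q_2,R_2)$, and the intermediate annulus by Lemma~\ref{annulus:comp:emb} combined with the local $L^2$ embedding of Remark~\ref{Lp_loc}, the exponents recombining to give exactly $\|u\|^{q_1}$. The only minor difference is at the end: you obtain an explicit linear bound $\|u\|_{L^{q_1}_K + L^{q_2}_K} \leq C\|u\|$ by testing the infimum in the sum norm with the cut decomposition $u\chi_{B_{\bar R}} + u\chi_{B_{\bar R}^c}$, whereas the paper concludes continuity sequentially via \cite[Proposition 2.7]{BaPiRo2011}; both steps are valid.
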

\begin{proof}
Without loss of generality, we can assume $R_1 < R_2$. Let $u \in H^s_{V,\rad}(\R^N)$, $u \neq 0$. We have
\begin{equation}
\label{BR1:est}
	\int_{B_{R_1}} K(|x|)|u|^{q_1} \, dx = \|u\|^{q_1}\int_{B_{R_1}} K(|x|)\frac{|u|^{q_1}}{\|u\|^{q_1}} \, dx \leq \|u\|^{q_1}\cS_0(q_1,R_1)
\end{equation}
and
\begin{equation}
\label{BR2:comp:est}
	\int_{B^c_{R_2}} K(|x|)|u|^{q_2} \, dx = \|u\|^{q_2}\int_{B^c_{R_2}} K(|x|)\frac{|u|^{q_2}}{\|u\|^{q_2}} \, dx \leq \|u\|^{q_2}\cS_{\infty}(q_2,R_2)
\end{equation}
and both are finite thanks to assumption \eqref{S':hyp}.

We now estimate the integral on the annulus $B_{R_2} \setminus B_{R_1}$, by Lemma \ref{annulus:comp:emb} with $q=q_1$ and Remark \ref{Lp_loc} with $p=2$.
Denoting by $C$ any positive constant, independent of $u$, we get
%
%
\begin{equation}
\begin{aligned}
\label{annulus:cont:est:2}
	\int_{B_{R_2} \setminus B_{R_1}} K(|x|)|u|^{q_1} \, dx &\leq C \left(\int_{B_{R_2} \setminus B_{R_1}} |u|^2 \, dx \right)^{\frac{\tilde{q}-1}{2}} \|u\|^{1+q_1-\tilde{q}_s}  \\
	&\leq C \|u\|^{\tilde{q}-1}\|u\|^{1+q_1-\tilde{q}} 
	= C \|u\|^{q_1}.
\end{aligned}
\end{equation}
%
So, by \eqref{BR1:est} and \eqref{annulus:cont:est:2}, we obtain
\begin{equation}
\label{BR2:est}
	\int_{B_{R_2}} K(|x|)|u|^{q_1} \, dx \leq \|u\|^{q_1}\cS_0(q_1,R_1) + C\|u\|^{q_1} 
\end{equation}
and hence, recalling \eqref{BR2:comp:est}, we conclude that  $u \in L^{q_1}_K(B_{R_2}) \cap L^{q_2}_K(B^c_{R_2})$, and therefore $u \in L^{q_1}_K + L^{q_2}_K$. This implies that $H^s_{V,\rad}(\R^N) \subset L^{q_1}_K (\R^N) + L^{q_2}_K (\R^N)$. 

Moreover, if $\{u_n\}_n \subset H^s_{V,\rad}(\R^N)$ is a sequence such that $u_n \to 0$ in $H^s_{V,\rad}(\R^N)$, then, by \eqref{BR2:comp:est} and \eqref{BR2:est}, we have
\begin{align*}
	\int_{B_{R_2}} K(|x|)|u_n|^{q_1} \, dx + \int_{B^c_{R_2}} K(|x|)|u_n|^{q_2} \, dx \leq  C_3\|u_n\|^{q_1} + \|u_n\|^{q_2}\cS_{\infty}(q_2,R_2), 
\end{align*}
where the right-hand side goes to $0$ as $n \to +\infty$. By \cite[Proposition 2.7]{BaPiRo2011}, this yields that $u_n \to 0$ in $L^{q_1}_K + L^{q_2}_K$.
\end{proof}

\subsection{Compact embedding}
We now state and prove the main result of this Section, that is the compact embedding of $H^s_{V,\rad}(\R^N)$ into $L^{q_1}_K(\R^N) + L^{q_2}_K(\R^N)$.
\begin{Th}
\label{th:comp:emb}
Let $N \geq 2$, $s>1/2$, $q_1, q_2 > 1$ and $V$ and $K$ as in (V) and (K). If
\begin{equation}\tag{$\cS''$}
\label{S'':hyp}
	\lim_{R \to 0^+}\cS_0(q_1,R) = \lim_{R \to +\infty}\cS_{\infty}(q_2,R) = 0,
\end{equation}
then $H^s_{V,\rad}(\R^N)$ is compactly embedded into $L^{q_1}_K(\R^N) + L^{q_2}_K(\R^N)$.
\end{Th}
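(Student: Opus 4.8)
The plan is to run the standard three-region splitting of $\R^N$ into a small ball, an annulus, and the exterior of a large ball: hypothesis \eqref{S'':hyp} makes the small-ball and exterior contributions uniformly small over any bounded sequence, while the local compactness of Remark \ref{cpt_loc} kills the annulus contribution along a weakly convergent subsequence. I first reduce to weak null sequences. Let $\{u_n\}_n$ be bounded in $H^s_{V,\rad}(\R^N)$; since this is a Hilbert space, hence reflexive, I extract a subsequence with $u_n \weakto u$ and set $v_n := u_n - u$, so that $v_n \weakto 0$ and $M:=\sup_n \|v_n\| < \infty$. It then suffices to show $v_n \to 0$ in $L^{q_1}_K + L^{q_2}_K$, which gives $u_n \to u$ in the sum space and proves compactness of the (linear, continuous by Theorem \ref{th:cont:emb}) embedding.

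Fix $\varepsilon > 0$. Using \eqref{S'':hyp} I choose $r>0$ small enough that $\cS_0(q_1,r) < \varepsilon$ and $R>r$ large enough that $\cS_{\infty}(q_2,R) < \varepsilon$, and I decompose $v_n = v_n\chi_{B_R} + v_n\chi_{B_R^c} =: v_n^1 + v_n^2$, so that by definition of the sum norm
\[
\|v_n\|_{L^{q_1}_K + L^{q_2}_K} \le \max\left\{\|v_n^1\|_{L^{q_1}_K},\ \|v_n^2\|_{L^{q_2}_K}\right\}.
\]
On the exterior region, arguing exactly as in \eqref{BR2:comp:est}, I bound $\|v_n^2\|_{L^{q_2}_K}^{q_2} = \int_{B_R^c} K(|x|)|v_n|^{q_2}\,dx \le M^{q_2}\cS_{\infty}(q_2,R) < M^{q_2}\varepsilon$ for every $n$. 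On the ball I split $\int_{B_R} = \int_{B_r} + \int_{B_R\setminus B_r}$: the first piece is $\le M^{q_1}\cS_0(q_1,r) < M^{q_1}\varepsilon$ uniformly in $n$, as in \eqref{BR1:est}, while for the annulus I invoke Lemma \ref{annulus:comp:emb} with $q=q_1$ to obtain
\[
\int_{B_R \setminus B_r} K(|x|)|v_n|^{q_1}\,dx \le \tilde C\,\|K(|\cdot|)\|_{L^t(B_R \setminus B_r)} \left(\int_{B_R \setminus B_r} |v_n|^2\,dx\right)^{\frac{\tilde q - 1}{2}} M^{1+q_1-\tilde q}.
\]

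The annulus term is where genuine compactness enters, and I expect it to be the only delicate point; everything else is a uniform bound coming directly from the two suprema. Since $v_n \weakto 0$ in $H^s_{V,\rad}(\R^N)$ and $B_R\setminus B_r$ is a fixed bounded set, Remark \ref{cpt_loc} (the compact embedding $H^s_{V,\rad}(\R^N) \hookrightarrow L^2_{\loc}(\R^N)$, legitimate since $2\in(1,2_s^*)$) yields $\int_{B_R\setminus B_r}|v_n|^2\,dx \to 0$, so the annulus integral vanishes as $n\to\infty$ for these fixed $r,R$. Combining the three estimates gives $\limsup_n \|v_n^1\|_{L^{q_1}_K} \le M\varepsilon^{1/q_1}$ and $\|v_n^2\|_{L^{q_2}_K} < M\varepsilon^{1/q_2}$, whence
\[
\limsup_n \|v_n\|_{L^{q_1}_K + L^{q_2}_K} \le M\max\left\{\varepsilon^{1/q_1},\ \varepsilon^{1/q_2}\right\}.
\]
Letting $\varepsilon \to 0^+$ forces $\|v_n\|_{L^{q_1}_K + L^{q_2}_K} \to 0$, which completes the proof.
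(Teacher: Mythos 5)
Your proof is correct and follows essentially the same route as the paper: the same split into $B_{r}$, $B_{R}\setminus B_{r}$ and $B_{R}^{c}$, with \eqref{S'':hyp} controlling the two outer regions uniformly and Lemma \ref{annulus:comp:emb} plus the local compactness of Remark \ref{cpt_loc} handling the annulus. The only (harmless) differences are cosmetic: you make explicit the reduction from bounded sequences to weak null sequences, and you conclude directly from the definition of the norm on $L^{q_1}_K+L^{q_2}_K$ via the decomposition $v_n\chi_{B_R}+v_n\chi_{B_R^c}$, where the paper instead invokes Proposition 2.7 of \cite{BaPiRo2011}.
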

\begin{proof}
Let $\eps > 0$ and let $\{u_n\}_n \subset H^s_{V,\rad}(\R^N)$ be a sequence such that $u_n \weakto 0$ in $H^s_{V,\rad}(\R^N)$. It follows that $\left\{\|u_n\|\right\}_n$ is bounded and hence, as in \eqref{BR1:est} and \eqref{BR2:comp:est}, we can take $R_{\eps} > r_{\eps} > 0$ such that
\begin{equation}
\label{Br:est}
	\int_{B_{r_{\eps}}} K(|x|)|u_n|^{q_1} \, dx \leq \|u_n\|^{q_1}\cS_0(q_1,r_{\eps}) \leq \cS_0(q_1,r_{\eps}) \sup_n \|u_n\|^{q_1} < \frac{\eps}{3}
\end{equation}
and
\begin{equation}
\label{Br:comp:est}
	\int_{B^c_{R_{\eps}}} K(|x|)|u_n|^{q_2} \, dx \leq \|u_n\|^{q_2}\cS_{\infty}(q_2,R_{\eps}) \leq \cS_{\infty}(q_2,R_{\eps}) \sup_n \|u_n\|^{q_2} < \frac{\eps}{3}
\end{equation}
for all $n$ large enough.
Denoting by $C$ a positive constant independent of $u_n$, we use Lemma \ref{annulus:comp:emb} to get
%
%
\begin{equation}
\begin{aligned}
	\int_{B_{R_{\eps}} \setminus B_{r_{\eps}}} K(|x|)|u_n|^{q_1} \, dx &\leq C \left(\int_{B_{R_{\eps}} \setminus B_{r_{\eps}}} |u_n|^2 \, dx \right)^{\frac{\tilde{q}-1}{2}} \|u_n\|^{1+q_1-\tilde{q}} \\
	&\leq C \left(\int_{B_{R_{\eps}} \setminus B_{r_{\eps}}} |u_n|^2 \, dx \right)^{\frac{\tilde{q}-1}{2}}.
\end{aligned}
\end{equation}
By the compact embeddings of Remark \ref{cpt_loc}, this implies that  
\begin{equation} \label{annulus:comp:est}
	\int_{B_{R_{\eps}} \setminus B_{r_{\eps}}} K(|x|)|u_n|^{q_1} \, dx < \frac{\eps}{3}
\end{equation}
for all $n$ large enough. As a conclusion, \eqref{Br:est}, \eqref{Br:comp:est} and \eqref{annulus:comp:est} give
\[
\int_{B_{R_{\eps}}} K(|x|)|u_n|^{q_1} \, dx + \int_{B^c_{R_{\eps}}} K(|x|)|u_n|^{q_2} \, dx < \eps
\]
for $n$ sufficiently large, which means that $u_n \to 0$ in $L^{q_1}_K (\mathbb{R}^N)+ L^{q_2}_K (\mathbb{R}^N)$ by \cite[Proposition 2.7]{BaPiRo2011}.
\end{proof}

\begin{Rem} \normalfont
\label{rem:conditions}
Observe that condition \eqref{S'':hyp} implies condition \eqref{S':hyp}.
\end{Rem}

\section{How to get compactness}
\label{Operative:way:Sec}
Conditions \eqref{S':hyp} and \eqref{S'':hyp} are not so straightforward to be checked. In this section we provide some results with which it is easier to obtain the two conditions. Therefore, Theorem \ref{th:comp:emb} will be a combination of Theorem \ref{S0:holdness} and Theorem \ref{Sinf:holdness} below, which also provide intervals for the exponents $q_1$ and $q_2$ (see Section \ref{Examples:Sec} where we give some examples). 

We begin with a technical Lemma.
\begin{Lem}(see Lemma 2 in \cite{BaGuRo2015-1})
\label{est_1}
Let $\Omega \subset \R^N$ be a nonempty measurable set and
\[
	\Lambda:= \sup_{x \in \Omega}\dfrac{K(|x|)}{|x|^{\alpha}V(|x|)^{\beta}} < +\infty
\]
for some $0 \leq \beta \leq 1$, $\alpha \in \R$.
Let $u \in H^s_{V,\rad}(\R^N)$ and assume that there exist $\nu \in \R$ and $m>0$ such that 
\[
	|u(x)| \leq \dfrac{m}{|x|^{\nu}} \text{ a.e. on } \Omega.
\]
Then, for every $q > \max\{1,2\beta\}$ one has
\begin{multline*}
	\int_{\Omega} K(|x|)|u(x)|^q \, dx \\	
	\leq
	\begin{cases}
	 \Lambda m^{q-1} \left(\int_{\Omega} |x|^{\frac{2N}{N+2s(1-2\beta)}(\alpha - \nu(q-1))} \, dx  \right)^{\frac{N+2s(1-2\beta)}{2N}} S^{1-2\beta}\|u\|, &\mbox{ if } 0 \leq \beta \leq \frac12\\
 	 \Lambda  m^{q-2\beta}\left(\int_{\Omega} |x|^{\frac{1}{1-\beta}(\alpha-\nu(q-2\beta))} \,dx \right)^{1-\beta} \|u\|^{2\beta}, &\mbox{ if } \frac12 < \beta < 1\\
 	 \Lambda m^{q-2}\left(\int_{\Omega} |x|^{2(\alpha -\nu(q-2))}V(|x|)|u|^2 \, dx \right)^{\frac12} \|u\|, &\mbox{ if } \beta = 1
	\end{cases}
\end{multline*}
where $S$ is the Sobolev constant \eqref{Sobolev}.
\end{Lem}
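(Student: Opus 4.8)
The plan is to reduce all three cases to one mechanism and to let the value of $\beta$ enter only in the final H\"older step. First I would use the definition of $\Lambda$ to bound, pointwise on $\Omega$,
\[
K(|x|)|u|^q \le \Lambda\,|x|^{\alpha} V(|x|)^{\beta}|u|^q,
\]
so that everything reduces to estimating $\Lambda\int_\Omega |x|^\alpha V^\beta |u|^q\,dx$. The decay hypothesis $|u(x)|\le m|x|^{-\nu}$ is then used to extract a pointwise power of $|u|$: I would write $|u|^q=|u|^{q-\gamma}\,|u|^{\gamma}$ with $\gamma=1$, $\gamma=2\beta$, $\gamma=2$ in the three respective regimes, bound $|u|^{q-\gamma}\le m^{q-\gamma}|x|^{-\nu(q-\gamma)}$ (legitimate precisely because $q>\max\{1,2\beta\}$ forces the exponent $q-\gamma$ to be nonnegative, so $t\mapsto t^{q-\gamma}$ is increasing), and absorb the surviving power of $|u|$ into $V^\beta$ through the identity $V^\beta|u|^{2\beta}=(V|u|^2)^\beta$. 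What remains is a pure power of $|x|$ paired with $(V|u|^2)^\beta|u|^{1-2\beta}$, or with $(V|u|^2)^\beta$ alone, or with $V|u|^2$.

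For $0\le\beta\le\tfrac12$ I would apply H\"older with three exponents to
\[
\int_\Omega |x|^{\alpha-\nu(q-1)}\,(V|u|^2)^\beta\,|u|^{1-2\beta}\,dx,
\]
choosing $p_2=1/\beta$ on the factor $(V|u|^2)^\beta$ and $p_3=2^*_s/(1-2\beta)$ on $|u|^{1-2\beta}$. The key computation is that the conjugate exponent $p_1$ of the remaining $|x|$-power then satisfies
\[
\frac1{p_1}=1-\beta-\frac{(1-2\beta)(N-2s)}{2N}=\frac{N+2s(1-2\beta)}{2N},
\]
which produces exactly the $|x|$-integral and outer exponent appearing in the statement. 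Bounding $\int_\Omega V|u|^2\le\|u\|^2$ and $\|u\|_{L^{2^*_s}}\le S\|u\|$ by Theorem \ref{crit_est} turns the last two factors into $\|u\|^{2\beta}$ and $S^{1-2\beta}\|u\|^{1-2\beta}$, whose product is $S^{1-2\beta}\|u\|$. The endpoints $\beta=0$ and $\beta=\tfrac12$ are the degenerate subcases in which one of the three factors disappears and H\"older is used with two exponents only.

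For $\tfrac12<\beta<1$ no residual $|u|^{1-2\beta}$ survives, so a two-exponent H\"older with $p_1=1/(1-\beta)$ and $p_2=1/\beta$ applied to $\int_\Omega |x|^{\alpha-\nu(q-2\beta)}(V|u|^2)^\beta\,dx$ gives directly the $|x|$-integral raised to $1-\beta$ and, via $\int_\Omega V|u|^2\le\|u\|^2$, the factor $\|u\|^{2\beta}$. For $\beta=1$ the same reduction leaves $\int_\Omega |x|^{\alpha-\nu(q-2)}V|u|^2\,dx$, which cannot be split by H\"older against a pure power of $V$; instead I would apply Cauchy--Schwarz in $L^2(dx)$, writing the integrand as $\bigl(|x|^{\alpha-\nu(q-2)}V^{1/2}|u|\bigr)\bigl(V^{1/2}|u|\bigr)$ to obtain $\bigl(\int_\Omega|x|^{2(\alpha-\nu(q-2))}V|u|^2\bigr)^{1/2}$ times $\bigl(\int_\Omega V|u|^2\bigr)^{1/2}\le\|u\|$.

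The only genuine obstacle is the exponent bookkeeping in the first case: one must check that the three H\"older exponents are admissible (all at least $1$, which uses $N>2s$ for $p_1$) and conjugate, and that the conjugate of the $|x|$-power collapses to precisely $\tfrac{2N}{N+2s(1-2\beta)}$. Everything else is routine once the two pointwise bounds (from $\Lambda$ and from $|u|\le m|x|^{-\nu}$) are in place; the hypothesis $q>\max\{1,2\beta\}$ is used exactly to keep the extracted power of $|u|$ nonnegative, and measurability of $V$ and $K$ together with $\Lambda<\infty$ is all that is needed of the weights.
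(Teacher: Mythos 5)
Your proof is correct and is essentially the paper's own argument: the same pointwise reductions via $\Lambda$ and the decay bound $|u|\le m|x|^{-\nu}$, the same H\"older exponents (your conjugate-exponent computation $\frac{1}{p_1}=\frac{N+2s(1-2\beta)}{2N}$ is exactly what the paper's exponents $t_1,t_2$ produce), and the same use of $\int_\Omega V(|x|)|u|^2\,dx\le\|u\|^2$ together with the Sobolev inequality $\|u\|_{L^{2^*_s}}\le S\|u\|$ of Theorem \ref{crit_est}. The only differences are organizational rather than substantive: you apply the decay bound before H\"older instead of after, collapse the paper's two iterated two-exponent H\"older applications (in the regimes $0<\beta<\tfrac12$ and $\tfrac12<\beta<1$) into a single three- or two-exponent application, and absorb the paper's separate $\beta=0$ and $\beta=\tfrac12$ cases as degenerate endpoints of your first case — all of which reproduce the stated bounds exactly, including the $\beta=1$ Cauchy--Schwarz step.
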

\begin{proof}
We divide the proof in several cases, where we will often make use of the H\"older inequality with exponents $t$, $t_1$, $t_2$ which will be specified at the beginning of each case, together with their conjugates exponents.

\bigskip

\textit{Case $\beta=0$:} $t=2^*_s$ and $t'=(2^*_s)'=\frac{2N}{N+2s}$.
\begin{align*}
	\dfrac{1}{\Lambda}\int_{\Omega} K(|x|)|u|^{q-1}|u| \, dx &\leq \int_{\Omega} |x|^{\alpha}|u|^{q-1}|u| \, dx \\
	&\leq \left(\int_{\Omega}\left(|x|^{\alpha}|u|^{q-1}\right)^{(2_s^*)'} \, dx \right)^{\frac{1}{(2_s^*)'}} \left(\int_{\Omega}|u|^{2_s^*} \, dx \right)^{\frac{1}{2_s^*}} \\
	& \leq \left(\int_{\Omega} |x|^{\alpha\frac{2N}{N+2s}} |u|^{(q-1)\frac{2N}{N+2s}} \, dx\right)^{\frac{N+2s}{2N}} \|u\|_{L^{2_s^*}}\\
	&\leq m^{q-1}\left(\int_{\Omega} |x|^{\frac{2N}{N+2s}(\alpha-\nu(q-1))} \, dx\right)^{\frac{N+2s}{2N}} S [u]_{H^s} \\
	&\leq m^{q-1}\left(\int_{\Omega} |x|^{\frac{2N}{N+2s}(\alpha-\nu(q-1))} \, dx\right)^{\frac{N+2s}{2N}} S \|u\|.
\end{align*}

\bigskip

\textit{Case $0 < \beta < \frac12$:} $t_1=\frac{1}{\beta}$ with $t'_1=\frac{1}{1-\beta}$ and $t_2=\left(\frac{1-\beta}{1-2\beta}2^*_s\right)$ with $t'_2=\frac{2N(1-\beta)}{N+2s(1-2\beta)}$.
Writing $|u|=|u|^{2\beta}|u|^{1-2\beta}$, we have
\begin{align*}
	&\dfrac{1}{\Lambda}\int_{\Omega} K(|x|)|u|^{q-1}|u| \, dx \leq \int_{\Omega} |x|^{\alpha}|u|^{q-1}V(|x|)^{\beta}|u| \, dx \\
	& \leq \left( \int_{\Omega} \left(|x|^{\alpha}|u|^{q-1}|u|^{1-2\beta} \right)^{t'_1} \, dx \right)^{\frac{1}{t'_1}}\left(\int_{\Omega}\left(V(|x|)^{\beta}|u|^{2\beta}\right)^{t_1} \, dx\right)^{\frac{1}{t_1}}\\
	& =\left( \int_{\Omega} |x|^{\frac{\alpha}{1-\beta}}|u|^{\frac{q-1}{1-\beta}}|u|^{\frac{1-2\beta}{1-\beta}} \, dx \right)^{1-\beta}\left(\int_{\Omega} V(|x|)|u|^2 \, dx\right)^{\beta}\\
	& \leq  \left(\int_{\Omega} |x|^{\frac{\alpha}{1-\beta}}|u|^{\frac{q-1}{1-\beta}}|u|^{\frac{1-2\beta}{1-\beta}} \, dx  \right)^{1-\beta} \|u\|^{2\beta}\\
	& = \left[\left(\int_{\Omega} \left(|x|^{\frac{\alpha}{1-\beta}}|u|^{\frac{q-1}{1-\beta}}\right)^{t'_2} \, dx  \right)^{\frac{1}{t'_2}} \left(\int_{\Omega}\left(|u|^{\frac{1-2\beta}{1-\beta}}\right)^{t_2} \, dx  \right)^{\frac{1}{t_2}} \right]^{1-\beta}  \|u\|^{2\beta}\\
	&\leq \left[\left(\int_{\Omega} |x|^{\frac{\alpha}{1-\beta} t'_2}|u|^{\frac{q-1}{1-\beta}t'_2} \, dx  \right)^{\frac{1}{t'_2}} \|u\|^{\frac{1-2\beta}{1-\beta}}_{L^{2^*_s}} \right]^{1-\beta} \|u\|^{2\beta}\\
	& \leq \left[\left(\int_{\Omega} |x|^{\frac{\alpha}{1-\beta}t'_2}m^{\frac{q-1}{1-\beta}t'_2}|x|^{-\nu\left(\frac{q-1}{1-\beta}\right)t'_2} \, dx  \right)^{\frac{N+2s(1-2\beta)}{2N(1-\beta)}} S^{\frac{1-2\beta}{1-\beta}}\|u\|^{\frac{1-2\beta}{1-\beta}} \right]^{1-\beta} \|u\|^{2\beta}\\
	& \leq m^{q-1} \left(\int_{\Omega} |x|^{\frac{2N}{N+2s(1-2\beta)}(\alpha-\nu(q-1))} \, dx  \right)^{\frac{N+2s(1-2\beta)}{2N}}S^{1-2\beta}\|u\|.
\end{align*}

\bigskip

\textit{Case $\beta = \frac12$:} $t=t'=2$.
\begin{align*}
	\dfrac{1}{\Lambda}\int_{\Omega} K(|x|)|u|^{q-1}|u| \, dx &\leq \int_{\Omega} |x|^{\alpha}|u|^{q-1}V(|x|)^{\frac{1}{2}}|u| \, dx \\
	& \leq \left(\int_{\Omega} \left(|x|^{\alpha}|u|^{q-1}\right)^{t'} \, dx \right)^{\frac{1}{t'}}\left(\int_{\Omega} \left(V(|x|)^{\frac{1}{2}}|u|\right)^t \, dx \right)^{\frac{1}{t}} \\
	& = \left(\int_{\Omega} \left(|x|^{\alpha}|u|^{q-1}\right)^2 \, dx \right)^{\frac12}\left(\int_{\Omega} V(|x|)|u|^2 \, dx \right)^{\frac12} \\
	& \leq \left(\int_{\Omega} |x|^{2\alpha}|u|^{2(q-1)} \, dx \right)^{\frac12}\|u\| \\
	& \leq m^{q-1}\left(\int_{\Omega} |x|^{2\left(\alpha - \nu(p-1)\right)} \, dx \right)^{\frac12} \|u\|.
\end{align*}

\bigskip

\textit{Case $\frac12 < \beta < 1$:} $t_1=t'_1=2$ and $t_2=\frac{1}{2\beta-1}$ with $t'_2=\frac{1}{2-2\beta}$.

\begin{align*}
	&\dfrac{1}{\Lambda}\int_{\Omega} K(|x|)|u|^{q-1}|u| \, dx \leq \int_{\Omega} |x|^{\alpha}|u|^{q-1}V(|x|)^{\beta}|u| \, dx \\
	&\leq \left(\int_{\Omega} \left(|x|^{\alpha}|u|^{q-1}V(|x|)^{\beta-\frac{1}{2}}\right)^{t'_1} \, dx\right)^{\frac{1}{t'_1}}\left(\int_{\Omega} \left(V(|x|)^{\frac{1}{2}}|u|\right)^{t_1} \, dx \right)^{\frac{1}{t_1}} \\
	&= \left(\int_{\Omega} \left(|x|^{\alpha}|u|^{q-1}V(|x|)^{\beta-\frac{1}{2}}\right)^2 \, dx\right)^{\frac12}\left(\int_{\Omega} V(|x|)|u|^2 \, dx \right)^{\frac12} \\  
	&\leq \left(\int_{\Omega} |x|^{2\alpha}|u|^{2(q-1)} V(|x|)^{2\beta-1}\, dx\right)^{\frac12}\|u\| \\
	&\leq \left(\int_{\Omega} |x|^{2\alpha}|u|^{2(q-2\beta)} V(|x|)^{2\beta-1} |u|^{2(2\beta-1)} \, dx\right)^{\frac12}\|u\| \\
	&\leq \left[\left(\int_{\Omega} \left(|x|^{2\alpha}|u|^{2(q-2\beta)}\right)^{t'_2} \,dx \right)^{\frac{1}{t'_2}} \left(\int_{\Omega} \left(V(|x|)^{2\beta-1}|u|^{2(2\beta-1)}\right)^{t_2} \, dx \right)^{\frac{1}{t_2}}\right]^{\frac12}\|u\| \\
	&= \left[\left(\int_{\Omega} |x|^{\alpha\frac{1}{1-\beta}}|u|^{\frac{q-2\beta}{1-\beta}} \,dx \right)^{2(1-\beta)} \left(\int_{\Omega} V(|x|)|u|^2\, dx \right)^{2\beta-1}\right]^{\frac12}\|u\| \\
	&= \left(\int_{\Omega} |x|^{\alpha\frac{1}{1-\beta}}|u|^{\frac{q-2\beta}{1-\beta}} \,dx \right)^{1-\beta} \|u\|^{2\beta-1}\|u\| \\
	&\leq \left(\int_{\Omega} |x|^{\alpha\frac{1}{1-\beta}}m^{\frac{q-2\beta}{1-\beta}}|x|^{-\nu\frac{q-2\beta}{1-\beta}} \,dx \right)^{1-\beta} \|u\|^{2\beta} \\
	&= m^{q-2\beta}\left(\int_{\Omega} |x|^{\frac{1}{1-\beta}(\alpha-\nu(q-2\beta))} \,dx \right)^{1-\beta} \|u\|^{2\beta}.
\end{align*}

\bigskip

\textit{Case $\beta=1$:} $t=t'=2$.
\begin{align*}
	\dfrac{1}{\Lambda}\int_{\Omega} K(|x|)|u|^{q-1}|u| \, dx &\leq \int_{\Omega} |x|^{\alpha}|u|^{q-1}V(|x|)|u| \, dx \\
	&\leq \left(\int_{\Omega} \left(|x|^{\alpha}|u|^{q-1}V(|x|)^{\frac12}\right)^{t'} \, dx \right)^{\frac{1}{t'}} \left(\int_{\Omega} \left(V(|x|)^{\frac12}|u|\right)^t \, dx \right)^{\frac{1}{t}}\\
	&=\left(\int_{\Omega} \left(|x|^{\alpha}|u|^{q-1}V(|x|)^{\frac12}\right)^2 \, dx \right)^{\frac12} \left(\int_{\Omega} V(|x|)|u|^2 \, dx \right)^{\frac12}\\
	& \leq \left(\int_{\Omega} |x|^{2\alpha}|u|^{2(q-1)} V(|x|) \, dx \right)^{\frac12} \|u\|\\
	& = \left(\int_{\Omega} |x|^{2\alpha}|u|^{2(q-2)} V(|x|)|u|^2 \, dx \right)^{\frac12} \|u\|\\
	& \leq m^{q-2}\left(\int_{\Omega} |x|^{2(\alpha - \nu(q-2))}V(|x|)|u|^2 \, dx \right)^{\frac12} \|u\|.
\end{align*}
\end{proof}

We introduce the functions $\alpha^*_s \colon [0,1] \to \mathbb{R}$ defined by 
\begin{equation}
\label{alpha_star_def}
	\alpha^*_s(\beta)=
	\begin{dcases}
		-\frac{N}{2}-(1-2\beta)s, &\mbox{ if }  0 \leq \beta \leq \frac12\\
		-(1-\beta)N, &\mbox{ if } \frac12 \leq \beta \leq 1 
	\end{dcases}
\end{equation}
and $q^*\colon \R \times [0,1] \times (0,1) \to \R$ defined as
\begin{equation}
\label{q_up_star_def}
	q^*(\alpha, \beta, s) =2\frac{\alpha - 2s\beta + N}{N-2s}.
\end{equation}


\begin{Rem}
\label{rem:alpha*:q*}
\begin{enumerate}[label=(\roman*)]
	\item $\alpha^*_s(1) = 0$ and $\alpha^*_s(\beta)<0$ for every $0\leq \beta < 1$.
	\item The function $q^*(\alpha, \beta, s)$ is increasing in $\alpha$ and decreasing in $\beta$.
\end{enumerate}
\end{Rem}

\subsection{Holdness of condition on \texorpdfstring{$\cS_0$}{S0}}
\begin{Th}
\label{S0:holdness}
Let $N \geq 2$, $s >1/2$ and let $V$, $K$ satisfy $(V)$ and $(K)$. Assume that there exists $R_1 > 0$ such that
\begin{equation}
\label{esssup:S0}
	\sup_{r \in (0,R_1)} \dfrac{K(r)}{r^{\alpha_0}V(r)^{\beta_0}} < +\infty
\end{equation}
for some $0 \leq \beta_0 \leq 1$ and $\alpha_0 > \alpha^*_s(\beta_0)$.

Then
\[
	\lim_{R \to 0^+} \cS_0(q_1,R)=0
\]
for every $q_1 \in \R$ such that
\[
	\max\{1,2\beta_0 \} < q_1 < q^*(\alpha_0,\beta_0,s).
\]
\end{Th}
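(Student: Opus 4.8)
The plan is to combine the pointwise Strauss-type decay \eqref{Strauss_ineq} with the integral estimate of Lemma \ref{est_1}, applied on the shrinking balls $\Omega = B_R$. First I would fix $q_1$ in the prescribed range, observing that the hypothesis $\alpha_0 > \alpha^*_s(\beta_0)$ is precisely what guarantees $q^*(\alpha_0,\beta_0,s) > \max\{1,2\beta_0\}$, so that such a $q_1$ exists: a short computation from \eqref{alpha_star_def} and \eqref{q_up_star_def} shows that $q^* > 1$ is equivalent to $\alpha_0 > \alpha^*_s(\beta_0)$ when $0 \le \beta_0 \le \tfrac12$, and $q^* > 2\beta_0$ is equivalent to the same inequality when $\tfrac12 \le \beta_0 \le 1$. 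Next, for any $u \in H^s_{V,\rad}(\R^N)$ with $\|u\| = 1$, the estimate \eqref{Strauss_ineq} gives $|u(x)| \le C(N,s)|x|^{-(N-2s)/2}$ a.e., so the hypothesis of Lemma \ref{est_1} holds on $B_R$ (for $R \le R_1$) with $\nu = (N-2s)/2$ and $m = C(N,s)$, while \eqref{esssup:S0} supplies the finite constant $\Lambda$.

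Then I would apply Lemma \ref{est_1} with $\alpha = \alpha_0$, $\beta = \beta_0$, $q = q_1$ and $\Omega = B_R$. In each of the three regimes the resulting bound on $\int_{B_R} K(|x|)|u|^{q_1}\,dx$ factors as a constant, independent of $u$ and $R$, times a power of $\|u\| = 1$, times an $R$-dependent integral. The central computation is to show that the exponent $\gamma$ of $|x|$ in that integral exceeds $-N$, which is exactly what makes $\int_{B_R}|x|^\gamma\,dx$ finite and vanishing as $R \to 0^+$. A direct calculation, substituting $\nu = (N-2s)/2$, shows that in both cases $0 \le \beta_0 \le \tfrac12$ and $\tfrac12 < \beta_0 < 1$ the inequality $\gamma > -N$ is equivalent to $q_1 < q^*(\alpha_0,\beta_0,s)$, so the upper constraint in the statement is precisely what this argument requires.

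The case $\beta_0 = 1$ must be handled separately, since there the bound from Lemma \ref{est_1} carries the weighted integral $\int_{B_R}|x|^{2(\alpha_0 - \nu(q_1-2))}V(|x|)|u|^2\,dx$ rather than a pure power of $|x|$. Here I would instead check that the exponent $\gamma = 2(\alpha_0 - \nu(q_1-2))$ is strictly positive, which is again equivalent to $q_1 < q^*(\alpha_0,1,s)$, bound $|x|^\gamma \le R^\gamma$ on $B_R$, and use $\int_{B_R} V(|x|)|u|^2\,dx \le \|u\|^2 = 1$; the whole factor is then controlled by $R^\gamma \to 0$.

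Finally, since all these bounds are uniform over $\{u : \|u\| = 1\}$, passing to the supremum yields $\cS_0(q_1,R) \le C\,\omega(R)$ with $\omega(R) \to 0$ as $R \to 0^+$, which is the claim. The main obstacle is purely the bookkeeping that in every regime the integrability threshold $\gamma = -N$, or $\gamma = 0$ when $\beta_0 = 1$, translates exactly into $q_1 = q^*(\alpha_0,\beta_0,s)$; once this identification is carried out, the vanishing of $\cS_0$ is automatic.
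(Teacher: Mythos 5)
Your proposal is correct and follows essentially the same route as the paper: you invoke the Strauss estimate \eqref{Strauss_ineq} to apply Lemma \ref{est_1} on $\Omega=B_R$ with $\nu=(N-2s)/2$, $m=C(N,s)$, $\alpha=\alpha_0$, $\beta=\beta_0$, and in each regime verify that the resulting power of $R$ is positive, which is exactly equivalent to $q_1<q^*(\alpha_0,\beta_0,s)$ (including your correct separate treatment of $\beta_0=1$, where one needs $2\bigl(\alpha_0-\nu(q_1-2)\bigr)>0$ and the bound $\int_{B_R}V(|x|)|u|^2\,dx\leq\|u\|^2=1$, just as in the paper). The only cosmetic difference is that you phrase the threshold as the exponent of $|x|$ exceeding $-N$ before integrating, while the paper integrates first and checks positivity of the exponent of $R$; these are the same computation.
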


\begin{proof}
Let $u \in H^s_{V,\rad}$ such that $\|u\|=1$. Let $0 \leq R \leq R_1$, using Theorem \ref{Strauss_Th}, then we can apply Lemma \ref{est_1} with: 
\[
	\Omega=B_R, \quad \alpha=\alpha_0, \quad \beta=\beta_0, \quad m=C(N,s)\|u\|=C(N,s), \quad \nu=\frac{N-2s}{2}.
\]

\textit{Case $0 \leq \beta_0 \leq \frac12$:}
\begin{multline*}
	\int_{B_R} K(|x|)|u|^{q_1} \, dx \\
	\leq \Lambda C(N,s)^{q_1-1}\left(\int_{B_R}|x|^{\frac{2N}{N+2s(1-2\beta_0)}\left(\alpha_0-\frac{N-2s}{2}(q_1-1)\right)} \, dx \right)^{\frac{N+2s(1-2\beta_0)}{2N}} S^{1-2\beta_0} \\
	= C \left(\int_0^R r^{\frac{2N}{N+2s(1-2\beta_0)}\left(\alpha_0-\frac{N-2s}{2}(q_1-1)\right)+N-1} \, dr \right)^{\frac{N+2s(1-2\beta_0)}{2N}}\\
	=  \wt{C} \left(R^{\frac{2N}{N+2s(1-2\beta_0)}\left(\alpha_0-\frac{N-2s}{2}(q_1-1)\right)+N}\right)^{\frac{N+2s(1-2\beta_0)}{2N}}
\end{multline*}
where
\begin{multline*}
	\frac{2N}{N+2s(1-2\beta_0)}\left(\alpha_0-\frac{N-2s}{2}(q_1-1)\right)+N\\
	=\frac{N-2s}{N+2s(1-2\beta_0)}\left(\frac{2\alpha_0 - 4s\beta_0 + 2N }{N-2s} - q_1 \right) = \frac{N-2s}{N+2s(1-2\beta_0)}(q^*(\alpha_0,\beta_0,s) - q_1) > 0.
\end{multline*}

\bigskip

\textit{Case $\frac12 < \beta_0 < 1$:}
\begin{multline*}
	\int_{B_R} K(|x|)|u(x)|^{q_1} \, dx \\ 
	\leq \Lambda C(N,s)^{q_1-2\beta_0}\left(\int_{B_R}|x|^{\frac{1}{1-\beta_0}\left(\alpha_0-\frac{N-2s}{2}(q_1-2\beta_0)\right)} \, dx  \right)^{1-\beta_0}\\
	= C \left(\int_0^R r^{\frac{1}{1-\beta_0}\left(\alpha_0-\frac{N-2s}{2}(q_1-2\beta_0)\right) + N - 1} \, dr \right)^{1-\beta_0} \\
	= \wt{C} \left(R^{\frac{1}{1-\beta_0}\left(\alpha_0-\frac{N-2s}{2}(q_1-2\beta_0)\right) + N}  \right)^{1-\beta_0}
\end{multline*}
where
\begin{multline*}
	\frac{1}{1-\beta_0}\left(\alpha_0-\frac{N-2s}{2}(q-2\beta_0)\right) + N =\frac{N-2s}{2(1-\beta_0)}\left(\frac{2\alpha_0 - 4\beta_0 s + 2N}{N-2s} - q_1 \right) \\
	=\frac{N-2s}{2(1-\beta_0)}\left(q^*\left(\alpha_0, \beta_0, s\right)-q_1\right) > 0.
\end{multline*}

\bigskip

\textit{Case $\beta_0 = 1$:}
\begin{align*}
	\int_{B_R} K(|x|)|u(x)|^{q_1} \, dx & \leq \Lambda C(N,s)^{q_1-2}\left(\int_{B_R} |x|^{2\left(\alpha_0-\frac{N-2s}{2}(q_1-2)\right)}V(|x|)|u|^2 \, dx \right)^{\frac12} \\
	& \leq C\left(R^{2\left(\alpha_0-\frac{N-2s}{2}(q_1-2)\right)}\int_{B_R} V(|x|)|u|^2 \, dx \right)^{\frac12}\\
	& \leq \wt{C}R^{\frac{2\alpha_0-(N-2s)(q_1-2)}{2}}
\end{align*}
where
\begin{multline*}
	\frac{2\alpha_0-(N-2s)(q_1-2)}{2}
	=\frac{2\alpha_0 + 2N - 4s+(N-2s)q_1}{2}  \\
	= \frac{N-2s}{2}\left(q^*\left(\alpha_0,1,s\right)-q_1\right) > 0.
\end{multline*}

We define the positive function
\[
	\delta_0:=\delta_0(N,\alpha_0,\beta_0,q_1,s)=
	\begin{dcases}
		\frac{N-2s}{N+2s(1-2\beta_0)}\left(q^*\left(\alpha_0,\beta_0,s\right) - q_1 \right), &\mbox{ if } 0 \leq \beta_0 \leq \frac12; \\
		\frac{N-2s}{2(1-\beta_0)}\left(q^*\left(\alpha_0,\beta_0,s\right) - q_1 \right), &\mbox{ if } \frac12 < \beta_0 < 1; \\
		\frac{N-2s}{2}\left(q^*\left(\alpha_0,\beta_0,s\right) -q_1 \right), &\mbox{ if } \beta_0 = 1.
	\end{dcases}
\]
and we observe that, for every $0 \leq \beta_0 \leq 1$ we have
\[
	\cS_0(q_1,R) \leq CR^{\delta_0}.
\]
To finish the proof, it is enough to take the limit as $R \to 0^+$.
\end{proof}

\begin{Rem} \normalfont
\label{Rem_q_up_star}
The request $\alpha_0 > \alpha^*_s(\beta_0)$ is equivalent to $q^*(\alpha_0,\beta_0,s) > \max\left\{1,2\beta_0\right\}$. In fact, recalling \eqref{alpha_star_def} and \eqref{q_up_star_def}, $q^*(\alpha_0,\beta_0,s) > 1$ holds if and only if
	$\alpha_0 > -N/2 - (1-2\beta_0)s$,
while $q^*(\alpha_0,\beta_0,s) > 2\beta_0$ holds if and only if
$\alpha_0 > -(1-\beta_0)N$.
Therefore, no further assumptions are needed on $\alpha_0$ and $\beta_0$.
\end{Rem}

\subsection{Holdness of condition on \texorpdfstring{$\cS_{\infty}$}{Sinf}}
\begin{Th}
\label{Sinf:holdness}
Let $N \geq 2$, $s > 1/2$ and let $V$, $K$ satisfy $(V)$ and $(K)$. Assume that there exists $R_2 > 0$ such that
\begin{equation}
\label{esssup:Sinf}
	\sup_{r > R_2} \dfrac{K(r)}{r^{\alpha_{\infty}}V(r)^{\beta_{\infty}}} < +\infty
\end{equation}
for some $0 \leq \beta_{\infty} \leq 1$ and $\alpha_{\infty} \in \R$.

Then
\[
	\lim_{R \to +\infty} \cS_{\infty}(q_2,R)=0
\] 
for every $q_2 \in \R$ such that
\[
	q_2 > \max\left\{1,2\beta_{\infty},q^*\left({\alpha_{\infty},\beta_{\infty},s}\right)\right\}.
\]
\end{Th}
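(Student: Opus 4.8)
The plan is to mirror the proof of Theorem \ref{S0:holdness}, replacing the ball $B_R$ by its complement $B^c_R$ and the limit $R \to 0^+$ by $R \to +\infty$. First I would fix $u \in H^s_{V,\rad}$ with $\|u\| = 1$ and take $R \ge R_2$, so that hypothesis \eqref{esssup:Sinf} guarantees that the constant $\Lambda = \sup_{x \in B^c_R} K(|x|)/(|x|^{\alpha_\infty}V(|x|)^{\beta_\infty})$ of Lemma \ref{est_1} is finite (since $B^c_R \subseteq B^c_{R_2}$). By the Strauss-type estimate \eqref{Strauss_ineq} we have $|u(x)| \le C(N,s)|x|^{-(N-2s)/2}$ a.e., so I would apply Lemma \ref{est_1} with $\Omega = B^c_R$, $\alpha = \alpha_\infty$, $\beta = \beta_\infty$, $m = C(N,s)$ and $\nu = (N-2s)/2$. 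The hypothesis $q_2 > \max\{1, 2\beta_\infty\}$ is exactly what is needed for the lemma to apply; note that here, unlike in Theorem \ref{S0:holdness}, no sign restriction is placed on $\alpha_\infty$, which is why $\max\{1, 2\beta_\infty\}$ appears explicitly in the threshold for $q_2$ alongside $q^*(\alpha_\infty, \beta_\infty, s)$.

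For $0 \le \beta_\infty \le 1/2$ and for $1/2 < \beta_\infty < 1$, Lemma \ref{est_1} bounds $\int_{B^c_R} K|u|^{q_2}\,dx$ by a constant times a power $\left(\int_{B^c_R} |x|^{\gamma}\,dx\right)^{\kappa}$, which in radial coordinates becomes $\left(\int_R^{+\infty} r^{\gamma + N - 1}\,dr\right)^{\kappa}$. The exponent arithmetic is identical to Theorem \ref{S0:holdness}: one finds $\gamma + N = \tfrac{N-2s}{N + 2s(1-2\beta_\infty)}(q^*(\alpha_\infty,\beta_\infty,s) - q_2)$ in the first range, and the analogous expression with prefactor $\tfrac{N-2s}{2(1-\beta_\infty)}$ in the second. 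The only difference is the sign needed: since $q_2 > q^*(\alpha_\infty,\beta_\infty,s)$, this quantity is now \emph{negative}, which is precisely the condition for the improper integral at infinity to converge, and integrating yields a strictly negative power of $R$.

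The case $\beta_\infty = 1$ would be handled separately and is the point requiring the most care. Here Lemma \ref{est_1} produces $\int_{B^c_R} |x|^{2(\alpha_\infty - \nu(q_2-2))} V(|x|)|u|^2\,dx$ under a square root. The exponent equals $2\alpha_\infty - (N-2s)(q_2-2) = (N-2s)(q^*(\alpha_\infty,1,s) - q_2)$, which is negative because $q_2 > q^*(\alpha_\infty,1,s)$. Since $|x| > R$ on $B^c_R$ and this power is negative, I would bound $|x|^{2(\alpha_\infty - \nu(q_2-2))} \le R^{2(\alpha_\infty - \nu(q_2-2))}$, pull the constant out, and use $\int_{B^c_R} V(|x|)|u|^2\,dx \le \|u\|^2 = 1$; the square root then gives $R^{\frac{N-2s}{2}(q^*(\alpha_\infty,1,s) - q_2)}$, again a negative power of $R$. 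This is exactly where negativity of the exponent (equivalently $q_2 > q^*$) is essential, since on the exterior region one can dominate a power of $|x|$ by the same power of $R$ only when that power is nonpositive.

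Collecting the three cases, I would define a positive exponent $\delta_\infty = \delta_\infty(N, \alpha_\infty, \beta_\infty, q_2, s)$ piecewise, mirroring $\delta_0$ in Theorem \ref{S0:holdness}, so that $\cS_\infty(q_2, R) \le C R^{-\delta_\infty}$ for all $R \ge R_2$ with $C$ independent of $u$ (the estimates being uniform over $\|u\| = 1$). Letting $R \to +\infty$ then yields $\lim_{R \to +\infty} \cS_\infty(q_2, R) = 0$. I expect no genuine obstacle beyond the bookkeeping already present in Theorem \ref{S0:holdness}; the one substantive structural change is the reversal of the inequality to $q_2 > q^*$ (versus $q_1 < q^*$), which simultaneously secures convergence of the integrals at infinity and the nonpositivity of the relevant power of $|x|$ in the $\beta_\infty = 1$ case.
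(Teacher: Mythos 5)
Your proposal is correct and follows essentially the same route as the paper's own proof: apply Lemma \ref{est_1} on $\Omega = B_R^c$ with $m = C(N,s)$ and $\nu = (N-2s)/2$ via the Strauss estimate \eqref{Strauss_ineq}, run the same exponent arithmetic in the three cases for $\beta_\infty$, and use $q_2 > q^*(\alpha_\infty,\beta_\infty,s)$ to make the resulting power of $R$ negative (which in the cases $\beta_\infty < 1$ gives convergence of the radial integral at infinity, and in the case $\beta_\infty = 1$ justifies bounding $|x|^{2(\alpha_\infty - \nu(q_2-2))} \leq R^{2(\alpha_\infty - \nu(q_2-2))}$ on $B_R^c$ together with $\int_{B_R^c} V(|x|)|u|^2\,dx \leq 1$), exactly as in the paper. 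The only discrepancy is a harmless constant: in the range $0 \leq \beta_\infty \leq 1/2$ the prefactor of $q^*(\alpha_\infty,\beta_\infty,s) - q_2$ is $\frac{N(N-2s)}{N+2s(1-2\beta_\infty)}$ rather than $\frac{N-2s}{N+2s(1-2\beta_\infty)}$, but since both are positive this does not affect the sign of the exponent nor the conclusion.
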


\begin{proof}
Let $u \in H^s_{V,\rad}$ such that $\|u\|=1$. Let $R \geq R_2$, using Theorem \ref{Strauss_Th}, then we can apply Lemma \ref{est_1} with: 
\[
	\Omega=B_R^c, \quad \alpha=\alpha_{\infty}, \quad \beta=\beta_{\infty}, \quad m=C(N,s)\|u\|=C(N,s), \quad \nu=\frac{N-2s}{2}.
\]

\textit{Case $0 \leq \beta_{\infty} \leq \frac12$:}
\begin{multline*}
	\int_{B_R^c} K(|x|)|u|^{q_2} \, dx \\
	\leq \Lambda C(N,s)^{q_2-1}\left(\int_{B_R}|x|^{\frac{2N}{N+2s(1-2\beta_{\infty})}\left(\alpha_{\infty}-\frac{N-2s}{2}(q_2-1)\right)} \, dx \right)^{\frac{N+2s(1-2\beta_{\infty})}{2N}} S^{1-2\beta_{\infty}} \\
	= C \left(\int_R^{+\infty} r^{\frac{2N}{N+2s(1-2\beta_{\infty})}\left(\alpha_{\infty}-\frac{N-2s}{2}(q_2-1)\right)+N-1} \, dr \right)^{\frac{N+2s(1-2\beta_{\infty})}{2N}}\\
	=  \wt{C} \left(R^{\frac{2N}{N+2s(1-2\beta_{\infty})}\left(\alpha_{\infty}-\frac{N-2s}{2}(q_2-1)\right)+N}\right)^{\frac{N+2s(1-2\beta_{\infty})}{2N}}
\end{multline*}
where
\begin{multline*}
	\frac{2N}{N+2s(1-2\beta_{\infty})}\left(\alpha_{\infty}-\frac{N-2s}{2}(q_2-1)\right)+N\\
	=\frac{N(N-2s)}{N+2s(1-2\beta_\infty)}\left(\frac{2\alpha_{\infty} - 4s\beta_{\infty} + 2N}{N-2s} - q_2 \right) \\
	= \frac{N(N-2s)}{N+2s(1-2\beta_\infty)}(q^*\left(\alpha_{\infty},\beta_{\infty},s) - q_2\right) < 0.
\end{multline*}

\bigskip

\textit{Case $\frac12 < \beta_{\infty} < 1$:}
\begin{align*}
	\int_{B_R^c} K(|x|)|u(x)|^{q_2} \, dx &\leq \Lambda C(N,s)^{q_2-2\beta_0}\left(\int_{B_R^c}|x|^{\frac{1}{1-\beta_{\infty}}\left(\alpha_{\infty}-\frac{N-2s}{2}(q_2-2\beta_{\infty})\right)} \, dx  \right)^{1-\beta_{\infty}}\\
	&= C \left(\int_R^{+\infty} r^{\frac{1}{1-\beta_{\infty}}\left(\alpha_{\infty}-\frac{N-2s}{2}(q_2-2\beta_{\infty})\right) + N - 1} \, dr \right)^{1-\beta_{\infty}} \\
	&= \wt{C} \left(R^{\frac{1}{1-\beta_{\infty}}\left(\alpha_{\infty}-\frac{N-2s}{2}(q_2-2\beta_{\infty})\right) + N}  \right)^{1-\beta_{\infty}}
\end{align*}
where
\begin{align*}
	\frac{1}{1-\beta_{\infty}}\left(\alpha_{\infty}-\frac{N-2s}{2}(q_2-2\beta_{\infty})\right) + N &=\frac{N-2s}{2(1-\beta_{\infty})}\left(\frac{2\alpha_{\infty} - 4\beta_{\infty} s + 2N}{N-2s} - q_2 \right) \\
	&=\frac{N-2s}{2(1-\beta_{\infty})}\left(q^*\left(\alpha_{\infty}, \beta_{\infty}, s\right)-q_2\right) < 0.
\end{align*}

\bigskip

\textit{Case $\beta_{\infty} = 1$:}
\begin{align*}
	\int_{B_R^c} K(|x|)|u(x)|^{q_2} \, dx &\leq \Lambda C(N,s)^{q_2-2}\left(\int_{B_R^c} |x|^{2\left(\alpha_{\infty}-\frac{N-2s}{2}(q_2-2)\right)}V(|x|)|u|^2 , dx \right)^{\frac12} \\
	& \leq C\left(R^{2\left(\alpha_{\infty}-\frac{N-2s}{2}(q_2-2)\right)}\int_{B_R^c} V(|x|)|u|^2 \, dx \right)^{\frac12}\\
	& \leq \wt{C}R^{\frac{2\alpha_{\infty}-(N-2s)(q_2-2)}{2}}
\end{align*}
where
\begin{multline*}
	\frac{2\alpha_{\infty}-(N-2s)(q_2-2)}{2}=\frac{N-2s}{2}\left(\frac{2\alpha_{\infty} + 2N - 4s}{N-2s} - q_2 \right) \\
	= \frac{N-2s}{2}\left(q^*\left(\alpha_{\infty},1,s\right)-q_2\right) < 0.
\end{multline*}

We define the negative function
\[
	\delta_{\infty}=\delta(N,\alpha_{\infty},\beta_{\infty},q_2,s)=
	\begin{dcases}
		\frac{N(N-2s)}{N+2s(1-2\beta_\infty)}\left(q^*\left(\alpha_{\infty}, \beta_{\infty}, s\right)- q_2 \right), &\mbox{ if } 0 \leq \beta_{\infty} \leq \frac12; \\
		\frac{N-2s}{2(1-\beta_{\infty})}\left(q^*\left(\alpha_{\infty}, \beta_{\infty}, s\right) - q_2 \right), &\mbox{ if } \frac12 < \beta_{\infty} < 1; \\
		\frac{N-2s}{2}\left(q^*\left(\alpha_{\infty}, \beta_{\infty}, s\right) - q_2 \right), &\mbox{ if } \beta_{\infty} = 1.
	\end{dcases}
\]
and we observe that, for all $0 \leq \beta_{\infty} \leq 1$ we have
\[
	\cS_{\infty}(q_1,R) \leq CR^{\delta_{\infty}}.
\]
We conclude by taking the limit as  $R \to +{\infty}$.
\end{proof}

\begin{Rem} \normalfont
By Remark \ref{Rem_q_up_star}, we observe that
\[
	\max\left\{1,2\beta,q^*(\alpha,\beta,s) \right\} = 
	\begin{dcases}
		q^*(\alpha,\beta,s), &\text{if } \alpha > \alpha^*_s(\beta),\\
		\max\left\{1,2\beta\right\}, &\text{otherwise}.
	\end{dcases}
\]
\end{Rem}

\section{Existence and multiplicity}
\label{Existence:Sec}
\label{Existence:section}
Using the compact embedding of Theorem \ref{th:comp:emb}, we are now able to show the existence of solutions to equation \eqref{1}, that is,
\[
	(-\Delta)^su + V(|x|)u = K(|x|)f(u) \quad \text{ in } \R^N,
\]
where $N \geq 2$, $V$ and $K$ are such that (V) and (K) hold, and  $f\colon \R \to \R$ is continuous and, together with $F(s):=\int_0^s f(\tau) \, d\tau$, it satisfies the following assumptions:
\begin{enumerate}[label=\textbf{(f\arabic*)}]
	\item (double-power growth condition) there exist $C>0$ and $q_1>2$, $q_2>2$ such that
		\[
			|f(t)| \leq C\min\left\{|t|^{q_1-1},|t|^{q_2-1} \right\} \text{ for all } t \in \R;
		\]
	\item there exists $t_0 > 0$ such that $F(t_0)>0$;
	\item (Ambrosetti-Rabinowitz condition) there exists $\mu>2$ such that
		\[
			0 \leq \mu F(s) \leq f(s)s \text{ for all } s \in \R.
		\]
\end{enumerate}
Some remarks on the hypotheses are in order.
\begin{Rem}\label{rem:hyp:f}
\begin{enumerate}
	\item Condition \textbf{(f1)} implies that there exists a positive constant $\wt{C}>0$ such 
		\[
			|f(t)| \leq \wt{C}|t|^{q-1}
		\]
		for $q=q_1$, $q=q_2$ and in particular for every $q$ in between.
	\item The assumptions $q_1 \neq q_2$ is never asked: in fact, we allow the case $q_1=q_2$ (i.e. the single-power growth condition).
	\item The \textit{Ambrosetti-Rabinowitz condition} implies that $q_1,q_2 \geq \mu$. In fact, \textbf{(f3)} implies that there exists $R>0$ and $C(R)>0$ such that
		\[
			F(t) \geq \frac{|t|^{\mu}}{R^{\mu}} - C(R),
		\]
	that is $F$ has superquadratic growth (or similarly $f$ has superlinear growth), so it follows that $q_1,q_2 \geq \mu$.
\end{enumerate}
\end{Rem}

\begin{Ex}
	The simplest nonlinearity satisfying the above assumption is 
	\[g\left( t\right) =\min \left\{ \left| t\right| ^{q_{1}-2}t,\left| t\right|
	^{q_{2}-2}t\right\} ,\qquad q_{1},q_{2}>4\,,
	\]
	which ensures \textbf{(f2)} with $\mu = \min \left\{ \frac{q_1}{2}, \frac{q_2}{2} \right\} $. Another model example is 
	\[
	g\left( t\right) =\frac{\left| t\right| ^{q_{2}-2}t}{1+\left| t\right|
		^{q_{2}-q_{1}}}\quad \text{with } 4<q_{1}\leq q_{2}\,,
	\]
	for which \textbf{(f2)} holds with $\mu =\frac{q_1}{2}$. Note that, in
	both these cases, $g$
	becomes $g\left( t\right) =\left| t\right| ^{q-2}t$ if $q_{1}=q_{2}=q$. 
\end{Ex}

Our approach is variational: we will find solutions to \eqref{1} as critical points of the  energy functional $\cE\colon H^s_{V,\rad} \to \R$ associated to the equation, namely
\begin{align*}
	\cE(u)&=\frac{C(N,s)}{4}\int_{\R^N \times \R^N} \frac{|u(x)-u(y)|^2}{|x-y|^{N+2s}} \, dx \, dy + \frac12\int_{\R^N}V(|x|)|u|^2 \, dx \\
	& \quad - \int_{\R^N}K(|x|)F(u) \, dx \\
	& =\frac12\|u\|^2 - \int_{\R^N}K(|x|)F(u) \, dx.
\end{align*}

By Proposition 3.8 in \cite{BaPiRo2011} it follows that, if \eqref{S':hyp} holds, the functional $\cE$ is of class $C^1$ on $H^s_{V,\rad}(\R^N)$
and its  G\^{a}teaux derivative at $u \in H^s_{V,\rad}(\R^N)$ along $v \in H^s_{V,\rad}(\R^N)$ is given by
\begin{multline*}
	\cE'(u)[v] = \frac{C(N,s)}{2}\int_{\R^N \times \R^N} \frac{|u(x)-u(y)||v(x)-v(y)|}{|x-y|^{N+2s}} \, dx \, dy \\
	+ \frac12\int_{\R^N}V(|x|)uv \, dx - \int_{\R^N}K(|x|)f(u)v \, dx.
\end{multline*}

Hence $u \in H^s_{V,\rad}(\R^N)$ is a \textit{weak solution} to \eqref{1} if and only if $\cE'(u)=0$, that is if and only if $u$ is a critical point for $\cE$.

The following Lemma gives a lower bound estimate for the energy functional.
\begin{Lem}
\label{Lemma:lower:bound}
If $s>1/2$ and \eqref{S'':hyp} holds, then there exist two constants $C_1>0$, $C_2>0$ (both independent of $u$), such that
\begin{equation}
\label{en:funct:lower:bound}
	\cE(u) \geq \frac12\|u\|^2 - C_1\|u\|^{q_1} - C_2\|u\|^{q_2}
\end{equation}
for all $u \in H^s_{V,\rad}(\R^N)$.
\end{Lem}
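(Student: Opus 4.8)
The plan is to reduce the statement to an upper bound on the nonlinear term $\int_{\R^N} K(|x|) F(u)\,dx$ and then recycle the integral estimates already produced in the proof of the continuous embedding, Theorem \ref{th:cont:emb}. Since $\cE(u) = \frac12\|u\|^2 - \int_{\R^N} K(|x|)F(u)\,dx$, the inequality \eqref{en:funct:lower:bound} is equivalent to
\[
\int_{\R^N} K(|x|) F(u)\,dx \le C_1\|u\|^{q_1} + C_2\|u\|^{q_2}.
\]

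First I would turn the growth hypothesis on $f$ into a growth bound on $F$. From \textbf{(f1)} one has $|f(t)| \le C|t|^{q_i-1}$ separately for $i=1$ and $i=2$, so integrating $F(t) = \int_0^t f(\tau)\,d\tau$ yields the two pointwise bounds $|F(t)| \le \frac{C}{q_1}|t|^{q_1}$ and $|F(t)| \le \frac{C}{q_2}|t|^{q_2}$, valid for every $t \in \R$. The idea is to use the first bound where $u$ is controlled near the origin and the second where $u$ is controlled at infinity, matching the two summands $L^{q_1}_K$ and $L^{q_2}_K$ of the target space.

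Next, I note that \eqref{S'':hyp} implies \eqref{S':hyp} by Remark \ref{rem:conditions}, so the estimates \eqref{BR2:est} and \eqref{BR2:comp:est} established in the proof of Theorem \ref{th:cont:emb} are available with the same radius $R_2 > 0$. I would then split $\R^N = B_{R_2} \cup B^c_{R_2}$ and estimate
\[
\int_{B_{R_2}} K(|x|)|F(u)|\,dx \le \frac{C}{q_1}\int_{B_{R_2}} K(|x|)|u|^{q_1}\,dx \le C_1\|u\|^{q_1}
\]
by the $q_1$-bound on $F$ together with \eqref{BR2:est}, and
\[
\int_{B^c_{R_2}} K(|x|)|F(u)|\,dx \le \frac{C}{q_2}\int_{B^c_{R_2}} K(|x|)|u|^{q_2}\,dx \le C_2\|u\|^{q_2}
\]
by the $q_2$-bound on $F$ together with \eqref{BR2:comp:est}. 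Here the resulting constants $C_1,C_2>0$ are finite and independent of $u$ precisely because $\cS_0(q_1,R_1)$ and $\cS_\infty(q_2,R_2)$ are finite under \eqref{S'':hyp}. Summing the two pieces and substituting into the expression for $\cE(u)$ yields \eqref{en:funct:lower:bound}.

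This argument is essentially an assembly of facts already proved, so no serious obstacle is expected. The only points requiring a little care are, first, checking that \textbf{(f1)} genuinely produces the two separate power bounds on $F$ (each obtained by discarding one branch of the minimum), and second, making sure the geometric split at radius $R_2$ is aligned with the correct space: the $q_1$-growth of $F$ must be paired with the interior ball, where the continuous-embedding estimate \eqref{BR2:est} is phrased in terms of $q_1$, and the $q_2$-growth with the exterior, where \eqref{BR2:comp:est} is phrased in terms of $q_2$. Both pairings are guaranteed by the very way those two estimates were derived.
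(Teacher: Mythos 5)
Your proposal is correct and takes essentially the same route as the paper's own proof: both reduce \eqref{en:funct:lower:bound} to bounding $\int_{\R^N} K(|x|)F(u)\,dx$ via the power bounds on $F$ from \textbf{(f1)}, split $\R^N$ at radius $R_2$, and pair the $q_1$-growth with the interior estimates and the $q_2$-growth with \eqref{BR2:comp:est}. The only cosmetic difference is that the paper re-splits $B_{R_2}$ into $B_{R_1}$ and the annulus, invoking \eqref{BR1:est} and \eqref{ann:estimate} separately, whereas you cite their already-combined form \eqref{BR2:est}.
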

\begin{proof}
Without loss of generality, we can assume that $R_1 < R_2$ in condition \eqref{S':hyp} (see Remark \ref{rem:conditions}). We have
\begin{align*}
	&\left|\int_{\R^N}K(|x|)F(u) \, dx \right| \leq C\int_{\R^N} K(|x|)\min\{|u|^{q_1},|u|^{q_2}\} \, dx \\
	& = C\left(\int_{B_{R_2}}K(|x|)|u|^{q_1} \, dx + \int_{B^c_{R_2}} K(|x|)|u|^{q_2} \, dx \right) \\
	& = C\left(\int_{B_{R_1}}K(|x|)|u|^{q_1} \, dx + \int_{B_{R_2} \setminus B_{R_1}}K(|x|)|u|^{q_1} \, dx + \int_{B^c_{R_2}} K(|x|)|u|^{q_2} \, dx \right)
\end{align*}
and we want to estimate all the three integrals on the last line. For the first and the third we recall \eqref{BR1:est} and \eqref{BR2:comp:est}. Regarding the middle integral, from \eqref{ann:estimate} we have that there exists a constant $\bar{C}:=\bar{C}(N,R_1,R_2,q,s,t)>0$ such that
\[
	\int_{B_{R_2} \setminus B_{R_1}}K(|x|)|u|^{q_1} \, dx \leq \bar{C}\|u\|^l\|u\|^{q_1-l}_{L^2(B_{R_2} \setminus B_{R_1})} \leq \bar{C}\|u\|^{q_1}.
\]
Hence, from the above estimate, \eqref{BR1:est} and \eqref{BR2:comp:est} we get
\begin{align*}
	\left|\int_{\R^N}K(|x|)F(u) \, dx \right| &\leq C\left(\|u\|^{q_1}\cS_0(q_1,R_1) + \bar{C}\|u\|^{q_1} + \|u\|^{q_2}\cS_{\infty}(q_2,R_2) \right) \\
	&=C_1\|u\|^{q_1} + C_2\|u\|^{q_2}.
\end{align*}
\end{proof}

In the next Lemma, we prove that the Palais-Smale condition holds for the functional $\cE$.
\begin{Lem}
\label{PS:holdness}
If $s>1/2$ and \eqref{S'':hyp} holds, then $\cE\colon H^s_{V,\rad}(\R^N) \to \R$ satisfies the $(PS)$-condition.
\end{Lem}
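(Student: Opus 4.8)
The plan is to verify the $(PS)$-condition along the classical three-step scheme: first show that any Palais--Smale sequence is bounded, then extract a weakly convergent subsequence and finally upgrade weak to strong convergence by exploiting the compact embedding of Theorem~\ref{th:comp:emb}. Let $\{u_n\}_n \subset H^s_{V,\rad}(\R^N)$ satisfy $\cE(u_n) \to c$ and $\cE'(u_n) \to 0$ in the dual space. To obtain boundedness I would test $\cE'(u_n)$ against $u_n$ itself and combine it with the energy level: using the Ambrosetti--Rabinowitz condition \textbf{(f3)}, which gives $\frac{1}{\mu}f(s)s - F(s) \geq 0$ and hence, since $K>0$,
\[
	\cE(u_n) - \frac{1}{\mu}\cE'(u_n)[u_n] = \left(\frac12 - \frac1\mu\right)\|u_n\|^2 + \int_{\R^N} K(|x|)\left(\frac{1}{\mu}f(u_n)u_n - F(u_n)\right) dx \geq \left(\frac12 - \frac1\mu\right)\|u_n\|^2.
\]
Since $\mu > 2$ the prefactor is positive, while the left-hand side is bounded above by $c + o(1) + o(1)\|u_n\|$ (as $\cE(u_n)\to c$ and $|\cE'(u_n)[u_n]|\le\|\cE'(u_n)\|_\ast\|u_n\|$); this forces $\{\|u_n\|\}_n$ to be bounded.

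Because $H^s_{V,\rad}(\R^N)$ is a Hilbert space, boundedness yields a subsequence (not relabelled) with $u_n \weakto u$ for some $u \in H^s_{V,\rad}(\R^N)$. Here is where condition \eqref{S'':hyp} enters: by Theorem~\ref{th:comp:emb} the embedding into $L^{q_1}_K(\R^N) + L^{q_2}_K(\R^N)$ is compact, so $u_n \to u$ strongly in $L^{q_1}_K + L^{q_2}_K$.

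The decisive step, and the one I expect to be the main obstacle, is to show that the nonlinear term does not obstruct compactness, namely that $\int_{\R^N} K(|x|) f(u_n)(u_n - u)\,dx \to 0$. The idea is that the growth condition \textbf{(f1)} makes the functional $v \mapsto \int_{\R^N} K(|x|)f(u_n)v\,dx$ bounded in the dual of $L^{q_1}_K + L^{q_2}_K$, which (as in \cite{BaPiRo2011}) can be identified with $L^{q_1'}_K \cap L^{q_2'}_K$; pairing this uniform bound with the strong convergence $u_n - u \to 0$ in $L^{q_1}_K + L^{q_2}_K$ then gives the claim. Concretely, \textbf{(f1)} yields $|f(t)| \leq C|t|^{q_1-1}$ and $|f(t)| \leq C|t|^{q_2-1}$, and splitting $\R^N$ into $B_{R_2}$ and $B^c_{R_2}$ (with $R_2$ as in \eqref{S':hyp}, available by Remark~\ref{rem:conditions}) and applying H\"older on each region as in the proof of Theorem~\ref{th:cont:emb} controls the relevant integrals by $\|u_n\|$ and by the local $L^{q_i}_K$-distance of $u_n$ to $u$. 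Equivalently, and more cleanly, this amounts to the complete continuity of the derivative of $u \mapsto \int_{\R^N} K(|x|)F(u)\,dx$ guaranteed by the framework of \cite[Proposition 3.8]{BaPiRo2011} under the compact embedding just established: from $u_n \weakto u$ it follows that $\int_{\R^N} K(|x|)(f(u_n)-f(u))(u_n-u)\,dx \to 0$, and since $u_n \weakto u$ also gives $\int_{\R^N} K(|x|)f(u)(u_n - u)\,dx \to 0$, the full integral vanishes in the limit. The delicate point is that the individual $L^{q_i}_K$-norms of $u_n$ need not be bounded, so the argument must respect the sum-space structure and use the $\min$ in \textbf{(f1)} together with the Strauss decay \eqref{Strauss_ineq} to handle the exterior region.

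Finally I would collect the pieces. Writing $\cE'(u_n)[u_n - u] = \|u_n\|^2 - (u_n,u) - \int_{\R^N} K(|x|) f(u_n)(u_n - u)\,dx$, where $(\cdot,\cdot)$ denotes the scalar product of $H^s_{V,\rad}(\R^N)$, the left-hand side tends to $0$ (as $\cE'(u_n)\to0$ and $\{u_n-u\}_n$ is bounded), the integral tends to $0$ by the previous step, and $(u_n,u) \to \|u\|^2$ by weak convergence; hence $\|u_n\|^2 \to \|u\|^2$. Together with $u_n \weakto u$, the Hilbert-space structure of $H^s_{V,\rad}(\R^N)$ then yields $u_n \to u$ strongly, which is exactly the $(PS)$-condition.
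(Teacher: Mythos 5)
Your proposal is correct and follows essentially the same route as the paper: boundedness of the $(PS)$-sequence via the Ambrosetti--Rabinowitz condition \textbf{(f3)}, extraction of a weak limit, strong convergence in $L^{q_1}_K(\R^N)+L^{q_2}_K(\R^N)$ via Theorem \ref{th:comp:emb}, and \cite[Proposition 3.8]{BaPiRo2011} to pass to the limit in the nonlinear term, concluding from norm convergence plus weak convergence in the Hilbert space. The only divergence is in the final bookkeeping: you test $\cE'(u_n)$ against $u_n-u$ to get $\|u_n\|\to\|u\|$ directly, whereas the paper sandwiches $\lim_n\|u_n\|^2$ between the weak lower semicontinuity of the norm and a convexity inequality --- your variant is, if anything, the more streamlined of the two.
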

\begin{proof}
Let $\{ u_n\}_n \in H^s_{V,\rad}(\R^N)$ be a $(PS)$-sequence for $\cE$, that is a sequence such that
\begin{equation}
\label{PS:seq}
	\left\{ \cE(u_n) \right\}_n \text{ is bounded and } \cE'(u_n) \to 0 \text{ in the dual space of } H^s_{V,\rad}(\R^N).
\end{equation}
This means that
\[
	\cE(u_n) = \frac12\|u_n\|^2 - \int_{\R^N}K(|x|)F(u_n) \, dx = O(1)
\]
and
\[
	\cE'(u_n)[u_n] = \|u_n\|^2 - \int_{\R^N}K(|x|)f(u_n)u_n \, dx = o(1)\|u_n\|.
\]
Since $f$ satisfies \textbf{(f3)} we have that
\begin{align*}
	\frac12\|u_n\|^2 + O(1) &= \int_{\R^N}K(|x|)F(u_n) \, dx \\
	&\leq \frac{1}{\mu}\int_{\R^N}K(|x|)f(u_n)u_n \, dx = \frac{1}{\mu} \|u_n\|^2 + o(1)\|u_n\|
\end{align*}
hence
\[
	\left(\frac12 - \frac{1}{\mu}\right)\|u_n\|^2 \leq o(1) \|u_n\|
\]
and we get that $\{\|u_n\|\}_n$ is bounded since $\mu > 2$. Therefore, there exists $u \in H^s_{V,\rad}(\R^N)$ such that \
\begin{equation}
\label{weak:conv}
	u_n \weakto u \text{ in } H^s_{V,\rad}(\R^N).
\end{equation} 

Thanks to \eqref{S'':hyp} we can apply Thereom \ref{th:comp:emb} obtaining
\[
	u_n \to u \in L^{q_1}_K(\mathbb{R}^N) + L^{q_2}_K(\mathbb{R}^N).
\]
Now, from \eqref{PS:seq} we have that
\[
	\|u_n\|^2 = \cE'(u_n)[u_n] + \int_{\R^N}K(|x|)f(u_n)u_n \, dx
\]
and by \cite{BaPiRo2011}, Proposition 3.8, this converges to
\[
	\|u\|^2 = o(1) + \int_{\R^N}K(|x|)f(u)u \, dx
\]
as $n \to +\infty$. Hence, $\lim_{n \to +\infty}\|u_n\|^2$ exists and, by the weak lower semicontinuity of the norm, we obtain
\begin{equation}
\label{A}
	\|u\|^2 \leq \lim_{n \to +\infty}\|u_n\|^2.
\end{equation}
Exploting the convexity of the norm, we have
\begin{align*}
	\frac12\|u\|^2 - \frac12\|u_n\|^2 &\geq \|u_n\|^2(u-u_n) \\
	&= \cE'(u_n)[u-u_n] + \int_{\R^N}K(|x|)f(u_n)(u-u_n) \, dx = o(1),
\end{align*}
and it follows that
\[
	\frac12\|u\|^2 \geq \frac12\|u_n\|^2 + o(1).
\]
Passing to the limit for $n \to +\infty$ leads to
\begin{equation}
\label{B}
	\frac12\|u\|^2 \geq \frac12\lim_{n \to +\infty}\|u\|^2. 
\end{equation}

From \eqref{A} and \eqref{B} we obtain
\[
	\frac12\|u\|^2	 \leq \frac12\lim_{n \to +\infty}\|u_n\|^2 \leq \frac12\|u\|^2,
\]
that is $\|u_n\| \to \|u\|$, that together with \eqref{weak:conv} gives that $u_n \to u$ in $H^s_{V,\rad}(\R^N)$.
\end{proof}

Now, we state and prove our main existence result, which provides the existence of a nonnegative nontrivial solution to \eqref{1}. The proof relies on the Mountain-Pass Theorem.
\begin{Th}
\label{exist:th}
Suppose that $s>1/2$ and \eqref{S'':hyp} holds, together with $\textbf{(f1)}, \textbf{(f2)}, \textbf{(f3)}$. Then equation \eqref{1} admits a solution $u \in H^s_{V,\rad}(\R^N)$, $u \neq 0$, $u \geq 0$.
\end{Th}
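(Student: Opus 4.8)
The plan is to realize the solution as a critical point of $\cE$ through the Mountain-Pass Theorem of Ambrosetti--Rabinowitz (\cite{AmRa1973}). Since \eqref{S'':hyp} implies \eqref{S':hyp} (Remark \ref{rem:conditions}), the functional $\cE$ is of class $C^1$ on $H^s_{V,\rad}(\R^N)$, and by Lemma \ref{PS:holdness} it satisfies the $(PS)$-condition; so the only things left to check are the mountain-pass geometry and the sign of the critical point. To guarantee nonnegativity I would first replace $f$ by its truncation $\tilde f(t):=f(t)$ for $t\ge 0$ and $\tilde f(t):=0$ for $t<0$, with primitive $\tilde F(t)=\int_0^t\tilde f$. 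One verifies immediately that $\tilde f$ still fulfils \textbf{(f1)}, \textbf{(f2)}, \textbf{(f3)} (all three constrain $f$ only for $t\ge 0$, and \textbf{(f3)} holds trivially for $t<0$), so every previous result applies verbatim to the truncated functional $\tilde\cE$.

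For the geometry near the origin I would invoke Lemma \ref{Lemma:lower:bound}: as $q_1,q_2>2$, the bound $\tilde\cE(u)\ge \frac12\|u\|^2-C_1\|u\|^{q_1}-C_2\|u\|^{q_2}$ produces $\rho>0$ small and $a>0$ with $\tilde\cE(u)\ge a$ for $\|u\|=\rho$, whereas $\tilde\cE(0)=0$. For the descent I would fix a nonnegative radial $w\in H^s_{V,\rad}(\R^N)$, $w\not\equiv 0$, whose support lies in an annulus bounded away from the origin (so that $K(|\cdot|)$ is bounded on $\supp w$ and $\int K|w|^\mu<+\infty$). The superquadratic estimate $\tilde F(t)\ge R^{-\mu}|t|^\mu-C(R)$ recalled in Remark \ref{rem:hyp:f} then yields
\[
	\tilde\cE(tw)\le \frac{t^2}{2}\|w\|^2-\frac{t^\mu}{R^\mu}\int_{\R^N}K(|x|)|w|^\mu\,dx+C(R)\int_{\supp w}K(|x|)\,dx .
\]
Since $\mu>2$ and $\int K|w|^\mu>0$, the right-hand side tends to $-\infty$ as $t\to+\infty$, giving $e:=t_*w$ with $\|e\|>\rho$ and $\tilde\cE(e)<0$.

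The Mountain-Pass Theorem then delivers a critical point $u$ at the level $c=\inf_{\gamma\in\Gamma}\max_{[0,1]}\tilde\cE\circ\gamma\ge a>0$, where $\Gamma$ is the family of paths joining $0$ to $e$; in particular $\tilde\cE(u)=c>0=\tilde\cE(0)$, so $u\neq 0$. To see $u\ge 0$ I would test the identity $\tilde\cE'(u)[u^-]=0$ with the admissible function $u^-:=\min\{u,0\}\in H^s_{V,\rad}(\R^N)$, using the pointwise inequality
\[
	\bigl(u(x)-u(y)\bigr)\bigl(u^-(x)-u^-(y)\bigr)\ge \bigl(u^-(x)-u^-(y)\bigr)^2 ,
\]
valid for all $x,y$ (checked by splitting into the sign cases of $u(x),u(y)$), together with $\int_{\R^N}V(|x|)\,u\,u^-\,dx=\int_{\R^N}V(|x|)\,|u^-|^2\,dx$ and $\int_{\R^N}K(|x|)\,\tilde f(u)\,u^-\,dx=0$ (because $\tilde f(u)=0$ where $u<0$). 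This forces $0=\tilde\cE'(u)[u^-]\ge \|u^-\|^2$, hence $u^-=0$, i.e. $u\ge 0$; and since $\tilde f(u)=f(u)$ on $\{u\ge 0\}$, the function $u$ solves the original equation \eqref{1}.

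I expect the two delicate points to be, first, the choice of $w$ in the descent step: it must be supported away from the origin so that the weighted integrals $\int K|w|^\mu$ remain finite and positive in spite of the possible singular or degenerate behaviour of $K$ and $V$. Second, and more genuinely nonlocal, is the nonnegativity argument, where the lack of a pointwise chain rule for $(-\Delta)^s$ forces one to replace the classical ``test with the negative part'' computation by the Gagliardo-seminorm inequality displayed above.
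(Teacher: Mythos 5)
Your proof is correct and follows essentially the same mountain-pass scheme as the paper's: Lemma \ref{Lemma:lower:bound} for the geometry near the origin, Lemma \ref{PS:holdness} for the $(PS)$-condition, and a superquadratic descent along a nonnegative radial function supported in an annulus away from the origin (your global bound $\tilde F(t)\ge R^{-\mu}t^{\mu}-C(R)$ for $t\ge 0$ is equivalent to the paper's integrated form of \textbf{(f3)} restricted to the superlevel set $\{u_0\ge t_0\}$). The only substantive difference is that you execute in full the truncation and the test with $u^-$ --- including the pointwise inequality $\bigl(u(x)-u(y)\bigr)\bigl(u^-(x)-u^-(y)\bigr)\ge\bigl(u^-(x)-u^-(y)\bigr)^2$ needed because of the nonlocality --- which the paper dismisses as ``a standard exercise,'' so your writeup is, if anything, more complete.
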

\begin{proof}
From \eqref{en:funct:lower:bound} we have that
\[
	\cE(u) \geq \frac12\|u\|^2 - C_1\|u\|^{q_1} - C_2\|u\|^{q_2} = \|u\|^2\left(\frac12- C_1\|u\|^{q_1-2} - C_2\|u\|^{q_2-2} \right)
\]
and considering the infimum over the functions $u \in H^s_{V,\rad}(\R^N)$ such that $\|u\|=\rho$, for a fixed $\rho>0$, we get
\[
	\inf_{\substack{u \in H^s_{V,\rad}(\R^N) \\ \|u\|=\rho}} \cE(u) \geq \rho^2\left(\frac12- C_1\rho^{q_1-2} - C_2\rho^{q_2-2} \right). 
\]
Hence, choosing $\rho>0$ such that $\frac12- C_1\rho^{q_1-2} - C_2\rho^{q_2-2} > 0$, we obtain
\begin{equation}
\label{MP:geom:i}
	\inf_{\substack{u \in H^s_{V,\rad}(\R^N) \\ \|u\|=\rho}} \cE(u) > 0.
\end{equation}

By Lemma \ref{PS:holdness} it follows that the Palais-Smale condition holds, so it only remains to prove that there exists an element $e \in H^s_{V,\rad}(\R^N) \setminus \bar{B}_{\rho}$ such that
\[
	\cE(e) < 0.
\]

Fix a $t_0 \in (0,t)$, integrating \textbf{(f3)}  from $t_0$ to $t$ we obtain
\begin{equation}
\label{F:est}
	F(t) \geq \frac{F(t)}{t_0^{\mu}}t^{\mu}
\end{equation}
for all $t \geq t_0$.
Thanks to the density of $C^{\infty}_c(\R^N)$ in $\dot{H}^s(\R^N)$, we fix a function $u_0 \in C^{\infty}_c(\R^N \setminus \{0\})$, $u_0 \geq 0$, such that the set
\[
	\left\{x \in \R^N : u_0(x) \geq t_0 \right\}
\]
has positive Lebesgue measure. Fixing also a $\lambda > 1$, using \eqref{F:est}, we have
\begin{align*}
	\int_{\R^N} K(|x|) F(\lambda u_0) \, dx &\geq \int_{\{u_0 \geq t_0\}} K(|x|)F(\lambda u_0) \, dx \geq \frac{\lambda^{\mu}}{t_0^{\mu}} \int_{\{u_0 \geq t_0\}} K(|x|)F(t_0)u_0^{\mu} \, dx \\
	& \geq \lambda^{\mu}\int_{\{u_0 \geq t_0\}} K(|x|)F(t_0) \, dx
\end{align*}
and this is positive thanks to \textbf{(f2)}.
Hence, since $\mu > 2$,
\[
	\lim_{\lambda \to +\infty} \cE(\lambda u_0) \leq \lim_{\lambda \to +\infty} \left(\frac{\lambda^2}{2}\|u_0\|^2 - \lambda^{\mu}\int_{\{u_0 \geq t_0\}} K(|x|)F(t_0) \right) = -\infty.
\]
Setting $e:=\lambda u_0$, for $\lambda$ sufficiently large, we proved that also the last geometric hypothesis of the Mountain Pass holds.
Hence, there exists a critical point $u \in H^s_{V,\rad}(\R^N)$, $u \neq 0$, for $\cE$. Up to suitably modifying $f(t)$ for $t<0$ (for instance setting $f(t)=0$), it is a standard exercise to conclude that $u$ is nonnegative.
\end{proof}

{Supposing that the nonlinearity enjoys some symmetry property, then we are able to prove a multiplicity existence result.
\begin{Th}\label{multi:th}
Under the same hypotheses of Theorem \ref{exist:th}, if moreover $f$ is odd, then there exist infinitely many solutions to equation \eqref{1}.
\end{Th}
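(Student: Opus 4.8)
The plan is to invoke the $\Z_2$-symmetric version of the Mountain-Pass Theorem (see \cite{AmRa1973}, \cite{Rabinowitz1974}). Since $f$ is odd, its primitive $F$ is even, and because $K(|\cdot|)$ is radial and the norm $\|\cdot\|$ is even, the functional $\cE$ is even with $\cE(0)=0$; moreover it is of class $C^1$ and satisfies the Palais-Smale condition by Lemma \ref{PS:holdness}. The abstract theorem then produces an unbounded sequence of critical values of $\cE$, hence infinitely many distinct critical points, i.e. infinitely many solutions of \eqref{1}, provided that, in addition to evenness and (PS), two geometric conditions hold: (I1) there exist $\rho,\alpha>0$ with $\cE(u)\geq\alpha$ whenever $\|u\|=\rho$; and (I2) for every finite-dimensional subspace $W\subset H^s_{V,\rad}(\R^N)$ there is $R_W>0$ such that $\cE(u)\leq 0$ for all $u\in W$ with $\|u\|\geq R_W$. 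Condition (I1) is immediate from \eqref{MP:geom:i}, so the substance of the proof is (I2).

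To verify (I2) I would first record two consequences of \textbf{(f3)}: on the one hand $F\geq 0$ everywhere (since $0\leq\mu F(s)$), and on the other hand $F$ is superquadratic, so that $\liminf_{|t|\to\infty}F(t)/|t|^{\mu}>0$ (cf. Remark \ref{rem:hyp:f}). Then I argue by contradiction: if (I2) failed for some finite-dimensional $W$, there would be $u_n\in W$ with $t_n:=\|u_n\|\to+\infty$ and $\cE(u_n)\geq -M$. Writing $u_n=t_n v_n$ with $\|v_n\|=1$ and using that all norms on the finite-dimensional space $W$ are equivalent, I may pass to a subsequence with $v_n\to v$ in $W$, where $\|v\|=1$, so $v\neq 0$. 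Since $v\in\dot H^s(\R^N)\setminus\{0\}$ does not vanish a.e., I can fix a compact annulus $A=\overline{B_{R_0}}\setminus B_{r_0}$ (with $0<r_0<R_0$) on which $v$ is not identically zero; there $K$ is continuous and positive, hence bounded below by some $k_1>0$, and $v_n\to v$ in $L^{\mu}(A)$ and pointwise a.e. along a further subsequence. Because $F\geq 0$ I may discard the integral over $\R^N\setminus A$ and estimate $\cE(u_n)/t_n^{\mu}\leq (2t_n^{\mu-2})^{-1}-\int_A K(|x|)\,F(t_n v_n)/t_n^{\mu}\,dx$; on $A$ the integrand converges pointwise, wherever $v\neq 0$, to a positive multiple of $K(|x|)|v|^{\mu}$, so Fatou's lemma gives $\liminf_n\int_A K\,F(t_nv_n)/t_n^{\mu}\geq c\,k_1\int_A|v|^{\mu}>0$ for some $c>0$. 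As $\mu>2$ the first term tends to $0$, whence $\cE(u_n)/t_n^{\mu}$ is eventually bounded above by a negative constant, forcing $\cE(u_n)\to-\infty$ and contradicting $\cE(u_n)\geq -M$.

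The only genuinely delicate point, which I expect to be the crux, is this verification of (I2): because $\int_{\R^N}K(|x|)\,dx$ need not be finite ($K$ is merely continuous and positive and may fail to decay), one cannot apply the global superquadratic lower bound on $F$ directly. The remedy is precisely the localization above — restricting to a compact annulus away from the origin on which $K$ is uniformly bounded below and on which the limiting direction $v$ is nontrivial, and exploiting $F\geq 0$ to throw away the remaining, possibly divergent, part of the integral. Once (I1), (I2), (PS) and evenness are in place, the symmetric Mountain-Pass Theorem yields the unbounded sequence of critical levels, and the conclusion of Theorem \ref{multi:th} follows.
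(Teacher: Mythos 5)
Your proposal is correct, and on the decisive step it takes a genuinely different route from the paper's own proof --- in fact a sounder one. Both arguments run the $\Z_2$-symmetric Mountain-Pass scheme: evenness of $\cE$ from the oddness of $f$, the (PS) condition from Lemma \ref{PS:holdness}, and the condition on the sphere from \eqref{MP:geom:i}; the divergence is in the second geometric condition (your (I2)). The paper disposes of it in one line by citing Lemma \ref{Lemma:lower:bound} and writing $\cE(u_n)\leq \frac12\|u_n\|^2 - C_1\|u_n\|^{q_1}-C_2\|u_n\|^{q_2}\to-\infty$; but Lemma \ref{Lemma:lower:bound} asserts the \emph{opposite} inequality ($\cE\geq$ that expression), since it rests on the upper bound $\left|\int_{\R^N}K(|x|)F(u)\,dx\right|\leq C_1\|u\|^{q_1}+C_2\|u\|^{q_2}$ coming from the growth condition \textbf{(f1)}, which can never force $\cE\to-\infty$. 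A correct verification must bound $F$ from \emph{below} via \textbf{(f2)}--\textbf{(f3)}, exactly as the existence proof of Theorem \ref{exist:th} does along the ray $\lambda u_0$, and your normalized-sequence argument is the natural finite-dimensional extension of that: writing $u_n=t_nv_n$, extracting $v_n\to v$ with $\|v\|=1$ by compactness of the unit sphere of $W$, localizing to a compact annulus where $K$ is bounded below and $v$ is nontrivial, discarding the rest of the integral via $F\geq 0$, and applying Fatou with $\liminf_{|t|\to\infty}F(t)/|t|^{\mu}>0$ (which indeed needs the evenness of $F$, available here precisely because $f$ is odd). This correctly handles the obstruction you identify, namely that $K$ need not be integrable, so the global lower bound $F(t)\geq |t|^{\mu}/R^{\mu}-C(R)$ of Remark \ref{rem:hyp:f} cannot simply be integrated over $\R^N$. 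Two minor remarks: the $L^{\mu}(A)$ convergence you invoke is automatic but also unnecessary (on an annulus away from the origin, radial functions are uniformly bounded by the Strauss estimate \eqref{Strauss_ineq}, and in any case pointwise a.e.\ convergence along a subsequence suffices for Fatou); and if (I2) fails one may even take $\cE(u_n)>0$, making the contradiction immediate. In short, your proof is longer than the paper's one-line treatment of (I2), but the paper's line reverses an inequality as printed, whereas your localization--Fatou argument closes that gap.
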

\begin{proof}
We want to use the $\Z^2-$version of the Mountain-Pass Theorem with $X_1=\{0\}$ and $X_2=H^s_{V,\rad}(\R^N)$. Since Lemma \ref{PS:holdness} is still true if $f$ is odd, we only need to prove the following:
\begin{enumerate}[label=\roman*)]
	\item there exist $\rho,\alpha>0$ such that $\inf(S_{\rho} \cap H^s_{V,\rad}(\R^N)) \geq \alpha$;
	\item for every finite dimensional $Y \subset H^s_{V,\rad}(\R^N)$ there exists $R=R(Y)>0$ such that
	\[
		\cE(u) < 0
	\]
	for every $u \in Y \setminus \overline{B}_{\rho}$.
\end{enumerate}
Condition i) directly follows from \eqref{MP:geom:i}. To get condition ii), let $Y \subset H^s_{V,\rad}(\R^N)$ be a finite dimensional subspace and let $(u_n)_n \subset Y$ be a sequence such that $\|u_n\| \to +\infty$ as $n \to +\infty$. By Lemma \ref{Lemma:lower:bound} there exists two constant $C_1,C_2>0$ such that 
\[
	\cE(u_n) \leq \frac12\|u_n\|^2 - C_1\|u_n\|^{q_1} - C_2\|u_n\|^{q_2} \to -\infty
\]
since $q_1,q_2>2$ (see Remark \ref{rem:hyp:f}).

Hence, there exists an unbounded sequence of critical values of $\cE$.
\end{proof}

\section{Examples}
\label{Examples:Sec}
In this section we collect some examples which show how to apply our results to get compact embeddings for different potentials, and thus solutions to equation \eqref{1} by Theorems \ref{exist:th} and \ref{multi:th}. We assume as usual $N\geq 2$ and $s>\frac{1}{2}$.

\begin{Ex}[power type potentials]
 Consider the potentials
\[
	V(r)=r^a \quad \text{and} \quad K(r)=r^b, \quad a \in \R,\quad b > -\frac{N}{2}-s.
\]
Then
\[
	\sup_{r \in (0,R_1)} \frac{K(r)}{r^{\alpha_0}V(r)^{\beta_0}} = \sup_{r \in (0,R_1)} \frac{1}{r^{\alpha_0+a\beta_0 - b}},
\]
and assumption \eqref{esssup:S0} holds true if and only if $\beta_0\in[0,1]$ and $\alpha_0 \leq b - a\beta_0$. 
On the other hand, assumption \eqref{esssup:Sinf} holds if and only if $\beta_{\infty}\in[0,1]$ and $\alpha_{\infty} \geq b - a\beta_{\infty}$. 
By Remark \ref{rem:alpha*:q*}, we choose $\beta_0=0$, $\beta_{\infty}=1$ and thus $\alpha_0=b$, $ \alpha_{\infty}=b-a$. Hence, by Theorems \ref{S0:holdness} and \ref{Sinf:holdness}, we get that
\[
	\lim_{R_1 \to 0^+} \cS_0(q_1,R_1) = 0
\]
for every $q_1 \in \R$ such that 
\[
	1 < q_1 < q^*(\alpha_0,\beta_0,s)=2\frac{b + N }{N-2s}=2\left(1+\frac{b + 2s }{N-2s}\right) 
\]
(note that $\alpha_0=b>\alpha^*_s(\beta_0)=\alpha^*_s(0)=-N/2-s$ by assumption) and
\[
	\lim_{R_2 \to +\infty} \cS_{\infty}(q_2,R_2) = 0
\]
for every $q_2 \in \R$ such that 
\[
	q_2 > \max\left\{1,q^*\left(\alpha_{\infty},\beta_{\infty},s\right)\right\}
	=\max\left\{1,2\left(1+\frac{b-a}{N-2s}\right) \right\}.
\]
Now, if $a>-2s$, then we can take $q_1=q_2=q$ and get the compact embedding $H^s_{V,\rad}(\mathbb{R}^N) \hookrightarrow L^q_K(\R^N)$ for every \(q\) such that 
\[
	\max\left\{1,2\left(1+\frac{b-a}{N-2s}\right) \right\}<q<
	2\left(1+\frac{b + 2s }{N-2s}\right).
\]
Otherwise, if $a\leq -2s$, then we get the compact embedding $H^s_{V,\rad}(\mathbb{R}^N) \hookrightarrow L^{q_1}_K(\mathbb{R}^N) + L^{q_2}_K\mathbb{R}^N)$ for every $q_1$ and $q_2$ such that 
\[
	1 < q_1 <2\left(1+\frac{b + 2s }{N-2s}\right) 
	\leq 2\left(1+\frac{b-a}{N-2s}\right) < q_2.
\]
\end{Ex}

\begin{Ex}[zero $V$ potential]
Our results also hold true if $V \equiv 0$, by taking $\beta_0=\beta_{\infty}=0$. 
In such a case, we get the compact embedding $H^s_{V,\rad} (\mathbb{R}^N)\hookrightarrow L^{q_1}_K(\mathbb{R}^N)+L^{q_2}_K(\mathbb{R}^N)$ for every $q_1$ and $q_2$ such that 
\[
1 < q_1 <2\frac{N + \alpha_0}{N-2s}
\quad\textrm{and}\quad 
q_2 > \max\left\{1,2\frac{N + \alpha_{\infty}}{N-2s} \right\},
\]
provided that there exist two numbers $\alpha_0 > -\frac{N}{2}-s \,\left( =\alpha^*_s(0)\right) $ and $\alpha_{\infty}\in\R$ such that 
\[ 
\limsup_{r\to 0} \frac{K(r)}{r^{\alpha_0}} < +\infty 
\quad\textrm{and}\quad 
\limsup_{r\to\infty} \frac{K(r)}{r^{\alpha_{\infty}}} < +\infty.
\]
If the latter assumption holds true for some $\alpha_{\infty}<\alpha_0$, then we get compact embeddings into single Lebesgue spaces.

Observe that, in general, it is convenient to choose $\alpha_0$ large and $<\alpha_{\infty}$ small. So, if for example $K(r)=r^b$ with $b > -\frac{N}{2}-s$, then we will pick $\alpha_0=\alpha_{\infty}=b$ and thus we get the compact embedding $H^s_{V,\rad}(\mathbb{R}^N) \hookrightarrow L^{q_1}_K(\mathbb{R}^N)+L^{q_2}_K(\mathbb{R}^N)$ for every $q_1$ and $q_2$ such that 
\[
1 < q_1 <2\frac{N + b}{N-2s}< q_2.
\]
\end{Ex}

\begin{Ex}[exponenatial type potentials]
Consider the following potentials:
\[
	V(r)=e^{2r} \quad \text{ and } \quad K(r)=e^r.
\]
As to assumption \eqref{esssup:S0}, we have that
\[
	\sup_{r \in (0,R_1)} \frac{K(r)}{r^{\alpha_0}V(r)^{\beta_0}} = \sup_{r \in (0,R_1)} \frac{e^r}{r^{\alpha_0}e^{2r\beta_0}} = \sup_{r \in (0,R_1)} \frac{e^{r(1-2\beta_0)}}{r^{\alpha_0}}
\]
is finite if and only if $\alpha_0 \leq 0$ and $\beta_0 \in [0,1]$. Since $q^*(\alpha,\beta,s)$ is increasing in $\alpha$ and decreasing in $\beta$ (see Remark \ref{rem:alpha*:q*}) we choose $\alpha_0=\beta_0=0$ and obtain that
\[
	\lim_{R_1 \to 0^+} \cS_0(q_1,R_1) = 0
\]
for every $q_1 \in \R$ such that 
\begin{equation}
\label{Ex3:q1}
	1 <  q_1 < q^*(0,0,s)=\frac{2N}{N-2s}=2^*_s.
\end{equation}
As to condition \eqref{esssup:Sinf}, we have  
\[
	\sup_{r > R_2} \frac{K(r)}{r^{\alpha_{\infty}}V(r)^{\beta_{\infty}}} = \sup_{r > R_2} \frac{e^r}{r^{\alpha_{\infty}}e^{2r\beta_{\infty}}} = \sup_{r > R_2} \frac{e^{r(1-2\beta_{\infty})}}{r^{\alpha_{\infty}}}
\]
which is finite if and only if $\beta_{\infty} = \frac12$ and $\alpha_{\infty} \geq 0$, or $\frac12<\beta_{\infty} \leq 1$ and $\alpha_{\infty} \in\R $. 
In the first case, it is convenient to choose $\alpha_{\infty} = 0$ (cf. Remark \ref{rem:alpha*:q*}) and we get that 
\begin{equation}
\label{Ex3:q2}
\lim_{R_2 \to +\infty} \cS_{\infty}(q_2,R_2) = 0
\end{equation}
for every $q_2 \in \R$ such that 
\begin{equation}
	\label{Ex3:q2_1}
	q_2 > \max\left\{1,q^*\left(0,\frac12,s\right)\right\}
	=2\frac{N-s}{N-2s}.
\end{equation}
In the second case, it is convenient to choose $\beta_{\infty}=1$ and $\alpha_{\infty}$ as small as possible (see again Remark \ref{rem:alpha*:q*}). Then, taking any $\alpha_{\infty}\leq 0$, we get \eqref{Ex3:q2} for every $q_2 \in \R$ such that 
\begin{equation}
	\label{Ex3:q2_2}
	q_2 > \max\left\{1,2,q^*\left(\alpha_{\infty},1,s\right)\right\}
	\max\left\{1,2,2\frac{\alpha_{\infty}+N-2s}{N-2s}\right\}
	=2,
\end{equation}
which is a better result than \eqref{Ex3:q2_1}, where the right-hand side is larger than $2$. As a conclusion, comparing \eqref{Ex3:q1} and \eqref{Ex3:q2_2}, we have a compact embedding $H^s_{V,\rad}(\mathbb{R}^N) \hookrightarrow L^{q}_K(\mathbb{R}^N)$ for every $2<q<2^*_s$.
%
\end{Ex}

\begin{Ex}[mixed type potentials]
Consider the following potentials:
\[
	V(r)=e^{-ar} \quad \text{ and } \quad  K(r)=r^de^{-br}
\]
with $a,b>0$ and $d \in \R$. Then
\[
	\sup_{r \in (0,R_1)} \frac{K(r)}{r^{\alpha_0}V(r)^{\beta_0}} = \sup_{r \in (0,R_1)} \frac{r^de^{-br}}{r^{\alpha_0}e^{-ar\beta_0}} = \sup_{r \in (0,R_1)} \frac{r^{d-\alpha_0}e^{-br}}{e^{-ar\beta_0}}
\]
is finite if and only if $\alpha_0 \geq d$ and $\beta_0 \in [0,1]$, so that we can choose $\beta_0=0$ and $\alpha_0$ as large as we like, in order that Theorem \ref{S0:holdness} yields  
\[
	\lim_{R_1 \to 0^+} \cS_0(q_1,R_1) = 0
\]
for every $q_1 >1 $. 
As to assumption \eqref{esssup:Sinf}, letting $\beta_{\infty}=0$ and $\alpha_{\infty}=-N$ we get 
\[
\sup_{r > R_2} \frac{K(r)}{r^{\alpha_{\infty}}V(r)^{\beta_{\infty}}} 
= \sup_{r > R_2} r^{d+N} e^{-br}
= 0
\]
and therefore Theorem \ref{Sinf:holdness} gives 
\[
\lim_{R_2 \to +\infty} \cS_{\infty}(q_2,R_2) = 0
\]
for every 
\[
	\label{Ex5:q2}
	q_2 > \max\left\{1,0,q^*(-N,0,s)\right\}
	=\max\left\{1,0,0\right\}=1.
\]
As a conclusion, Theorem \ref{th:comp:emb} ensures that the embedding $H^s_{V,\rad}(\mathbb{R}^N) \hookrightarrow L^{q}_K (\mathbb{R}^N)$ is compact for every $q>1$.
%
%
%
\end{Ex}


\bibliographystyle{abbrv}
\bibliography{Bibliography}
%
%
%
%
%
%
%
\end{document}